\newtheorem{theorem}{Theorem}[section]
\newtheorem{lemma}{Lemma}[section]
\newtheorem{corollary}{Corollary}[section]
\newtheorem{remark}{Remark}
\newtheorem{definition}{Definition}[section]
\newtheorem*{exampless}{Theorem A}
\numberwithin{equation}{section}
\begin{document}

\title[Generalized quadratic covariation]
{The generalized quadratic covariation for fractional Brownian
motion with Hurst index less than $1/2$ $^{*}$}

\footnote[0]{$^{*}$The Project-sponsored by NSFC (10571025).}

\author[L. Yan, C. Chen and J. Liu]
{Litan Yan${}^{\dag,\S}$, Chao Chen${}^{\ddag}$ and Junfeng
Liu${}^{\natural}$}

\footnote[0]{${}^{\S}$Corresponding author (litanyan@hotmail.com).}

\keywords{fractional Brownian motion,  Malliavin calculus, local
time, fractional It\^{o} formula, quadratic covariation.}

\subjclass[2000]{Primary 60G15, 60H05; Secondary 60H07}

\maketitle

\date{}

\begin{center}
{\it ${}^{\dag}$Department of Mathematics, Donghua University\\
2999 North Renmin Rd., Songjiang, Shanghai 201620, P.R. China}\\
{\it ${}^{\ddag}$Department of Mathematics, East China University of
Science and Technology\\
130 Mei Long Rd., Xuhui, Shanghai 200237, P.R. China}\\
{\it ${}^{\natural}$Department of Mathematics, Nanjing Audit
University, 86 West Yushang Rd., Nanjing 211815, P.R. China}
\end{center}


\maketitle

\begin{abstract}
Let $B^H$ be a fractional Brownian motion with Hurst index
$0<H<1/2$. In this paper we study the {\it generalized quadratic
covariation} $[f(B^H),B^H]^{(W)}$ defined by
$$
[f(B^H),B^H]^{(W)}_t=\lim_{\varepsilon\downarrow
0}\frac{2H}{\varepsilon^{2H}}\int_0^t\left\{f(B^{H}_{
s+\varepsilon})
-f(B^{H}_s)\right\}(B^{H}_{s+\varepsilon}-B^{H}_s)s^{2H-1}ds,
$$
where the limit is uniform in probability and $x\mapsto f(x)$ is a
deterministic function. We construct a Banach space ${\mathscr H}$
of measurable functions such that the generalized quadratic
covariation exists in $L^2$ and the Bouleau-Yor identity takes the
form
$$
[f(B^H),B^H]_t^{(W)}=-\int_{\mathbb {R}}f(x){\mathscr L}^{H}(dx,t)
$$
provided $f\in {\mathscr H}$, where ${\mathscr L}^{H}(x,t)$ is the
weighted local time of $B^H$. This allows us to write the fractional
It\^{o} formula for absolutely continuous functions with derivative
belonging to ${\mathscr H}$. These are also extended to the
time-dependent case.
\end{abstract}

\section{Introduction}
Given $H\in (0,1)$, a fractional Brownian motion (fBm) with Hurst
index $H$ is a mean zero Gaussian process $B^H=\{B_t^H, 0\leq t\leq
T\}$ such that
$$
E\left[B_t^HB_s^H\right]=\frac{1}{2}\left[t^{2H}+s^{2H}-|t-s|^{2H}
\right]
$$
for all $t,s\geqslant 0.$ For $H=1/2$, $B^H$ coincides with the
standard Brownian motion $B$. $B^H$ is neither a semimartingale nor
a Markov process unless $H=1/2$, so many of the powerful techniques
from stochastic analysis are not available when dealing with $B^H$.
As a Gaussian process, one can construct the stochastic calculus of
variations with respect to $B^{H}$. Some surveys and complete
literatures for fBm could be found in Biagini {\it et
al}~\cite{BHOZ}, Decreusefond and \"Ust\"unel~\cite{Dec}, Gradinaru
{\it et al}~\cite{Grad1,Grad2}, Hu~\cite{Hu2},
Mishura~\cite{Mishura2} and Nualart~\cite{Nua4}. It is well-known
that the usual quadratic variation $[B^{H},B^{H}]_t=0$ for $2H>1$
and $[B^{H},B^{H}]_t=\infty$ for $2H<1$, where
$$
\left[B^{H},B^{H}\right]_t=\lim_{\varepsilon\downarrow
0}\frac{1}{\varepsilon}\int_0^t(B^H_{s+\varepsilon}-B^H_s)^2ds
$$
in probability. Clearly, we have also
$$
[B^{H},B^{H}]_t=\lim_{n\to
\infty}\sum_{j=1}^n\left(B^H_{jt/n}-B^H_{(j-1)t/n}\right)^2,
$$
where the limit is uniform in probability. This is inconvenience to
some studies and applications for fBm. We need to find a
substitution tool. Recently, Gradinaru {\it et al}~\cite{Grad1} (see
also~\cite{Grad2} and the references therein) have introduced some
substitution tools and studied some fine problems. They introduced
firstly an It\^o formula with respect to a symmetric-Stratonovich
integral, which is closer to the spirit of Riemann sums limits, and
defined a class of high order integrals having an interest by
themselves. On the other hand, inspired by
Gradinaru-Nourdin~\cite{Grad3,Grad} and Nourdin {\it et
al}~\cite{Nour1,Nour2}, as the substitution tool of the quadratic
variation, Yan {\it et al}~\cite{Yan1} considered the {\it
generalized quadratic covariation}, and proved its existence for
$\frac12<H<1$ (Thanks to the suggestions of some Scholars we use the
present appellation).
\begin{definition}
Let $0<H<1$ and let $f$ be a measurable function on ${\mathbb R}$.
The limit
\begin{equation}\label{sec1-eq1.1}
\lim_{\varepsilon\downarrow
0}\frac{2H}{\varepsilon^{2H}}\int_0^t\left\{f(B^{H}_{
s+\varepsilon})
-f(B^{H}_s)\right\}(B^{H}_{s+\varepsilon}-B^{H}_s)s^{2H-1}ds
\end{equation}
is called the {\it generalized quadratic covariation} of $f(B^H)$
and $B^H$, denoted by $[f(B^H),B^H]^{(W)}_t$, provided the limit
exists uniformly in probability.
\end{definition}
In particular, we have
$$
[B^H,B^H]^{(W)}_t=t^{2H}
$$
for all $0<H<1$. If $H=\frac12$, the generalized quadratic
covariation coincides with the usual quadratic covariation of
Brownian motion $B$. For $\frac12<H<1$, Yan {\it et al}~\cite{Yan3}
showed the generalized quadratic covariation can also be defined as
\begin{equation}\label{sec1-eq1.2}
\left[f(B^H),B^H\right]^{(W)}_t=2H\lim_{\|\pi_n\|\to 0}\sum_{t_j\in
\pi_n}\left(\Lambda_j\right)^{2H-1}
\{f(B^H_{t_j})-f(B^H_{t_{j-1}})\} (B^H_{t_j}-B^H_{t_{j-1}}),
\end{equation}
provided the limit exists
uniformly in probability, where $\pi_n=\{0=t_0<t_1 <\cdots< t_n=t\}$
denotes an arbitrary partition of the interval $[0, t]$ with
$\|\pi_n\|= \sup_{j}(t_{j}-t_{j-1})\to 0$, and
$\Lambda_{j}=\frac{t_j}{t_j-t_{j-1}}$, $j=1,2,\ldots,n$.
Moreover, by applying the time reversal $\widehat{B}^H_t=B^H_{T-t}$
on $[0,T]$ and the integral
$$
\int_{\mathbb R}f(x)\mathscr{L}^{H}(dx,t),
$$
Yan {\it et al}~\cite{Yan3} constructed a Banach space ${\mathbb
B}_H$ of measurable functions such that the generalized quadratic
covariation $[f(B^H),B^H]^{(W)}_t$ exists in $L^2$ if $f\in {\mathbb
B}_H$, where
$$
{\mathscr L}^{H}(x,t)=2H\int_0^t\delta(B^{H}_s-x)s^{2H-1}ds
$$
is the weighted local time of fBm $B^{H}$. However, when
$0<H<\frac12$ the method used in Yan {\it et al}~\cite{Yan1,Yan3} is
inefficacy. In the present paper, we shall consider the generalized
quadratic covariation with $0<H<\frac12$. Our start point is to
consider the decomposition
\begin{equation}\label{sec1-eq1.3}
\begin{split}
&\frac{1}{\varepsilon^{2H}}\int_0^t\left\{f(B^{H}_{
s+\varepsilon})-f(B^{H}_s)\right\}(B^{H}_{s+\varepsilon}-B^{H}_s)ds^{2H}\\
&=\frac{1}{\varepsilon^{2H}}\int_0^tf(B^{H}_{
s+\varepsilon})(B^{H}_{s+\varepsilon}-B^{H}_s)ds^{2H}-\frac{1}{\varepsilon^{2H}}
\int_0^tf(B^{H}_s)(B^{H}_{s+\varepsilon}-B^{H}_s)ds^{2H}.
\end{split}
\end{equation}
Clearly, if the modulus in expression~\eqref{sec1-eq1.3} is $\frac{1}{\varepsilon}$, the decomposition is meaningless in general. For example, for $f(x)=x$ we have
\begin{align*}
\frac{1}{\varepsilon}
\int_0^tE\left[B^{H}_s(B^{H}_{s+\varepsilon}-B^{H}_s)\right]ds^{2H}
&=\frac{1}{\varepsilon}
\int_0^t\frac12\left[(s+\varepsilon)^{2H}-s^{2H}-\varepsilon^{2H}\right]ds^{2H}
\\
&\longrightarrow -\infty,
\end{align*}
as $\varepsilon\downarrow 0$. However,
\begin{align*}
\frac{1}{\varepsilon^{2H}}
&\int_0^t\left|EB^{H}_s(B^{H}_{s+\varepsilon}-B^{H}_s)\right|ds^{2H}\\
&=\frac{1}{\varepsilon^{2H}}
\int_0^t\frac12\left[s^{2H}+\varepsilon^{2H}-(s+\varepsilon)^{2H}\right]ds^{2H}\longrightarrow \frac12t^{2H},
\end{align*}
as $\varepsilon\downarrow 0$. Thus, for $0<H<\frac12$ we can
consider the decomposition~\eqref{sec1-eq1.3}. By estimating the two
terms of the right hand side in the
decomposition~\eqref{sec1-eq1.3}, respectively, we can construct a
Banach space ${\mathscr H}$ of measurable functions $f$ on $\mathbb
R$ such that $\|f\|_{\mathscr H}<\infty$, where
\begin{align*}
\|f\|_{\mathscr H}=\sqrt{\int_0^T\int_{\mathbb
R}|f(x)|^2e^{-\frac{x^2}{2s^{2H}}}\frac{dxds}{\sqrt{2\pi}s^{1-H}}}
+\sqrt{\int_0^T\int_{\mathbb R}|f(x)|^2
e^{-\frac{x^2}{2s^{2H}}}\frac{dxds}{\sqrt{2\pi}(T-s)^{1-H}}}.
\end{align*}
We show that {\it generalized quadratic covariation}
$[f(B^H),B^H]_t^{(W)}$ exists in $L^2$ for all $t\in [0,T]$ if $f\in
{\mathscr H}$. This allows us to write It\^{o}'s formula for
absolutely continuous functions with derivative belonging to
${\mathscr H}$ and to give the Bouleau-Yor identity. It is important
to note that the decomposition~\eqref{sec1-eq1.3} is inefficacy for
$\frac12<H<1$.

This paper is organized as follows. In Section~\ref{sec2} we present
some preliminaries for fBm. In Section~\ref{sec3}, we establish some
technical estimates associated with fractional Brownian motion with
$0<H<\frac12$. In Section~\ref{sec4}, we prove the existence of the
generalized quadratic covariation. We construct the Banach space
${\mathscr H}$ such that the generalized quadratic covariation
$[f(B^H),B^H]^{(W)}$ exists in $L^2$ for $f\in {\mathscr H}$. As an
application we show that the It\^o type formula
(F\"ollmer-Protter-Shiryayev's formula)
$$
F(B^H)=F(0)+\int_0^tf(B^H_s)dB^H_s+\frac12\left[f(B^H),B^H
\right]^{(W)}_t
$$
holds, where $F$ is an absolutely continuous function with the
derivative $F'=f\in {\mathscr H}$. In Section~\ref{sec5}, we
introduce the integral of the form
\begin{equation}\label{sec1-eq1.5}
\int_{\mathbb {R}}f(x){\mathscr L}^{H}(dx,t),
\end{equation}
where $x\mapsto f(x)$ is a deterministic function. We show that the
integral~\eqref{sec1-eq1.5} exists in $L^2$, and the Bouleau-Yor
identity takes the form
$$
[f(B^H),B^H]_t^{(W)}=-\int_{\mathbb {R}}f(x){\mathscr L}^{H}(dx,t)
$$
provided $f\in {\mathscr H}$. Moreover, by applying the
integral~\eqref{sec1-eq1.5} we show that~\eqref{sec1-eq1.1}
and~\eqref{sec1-eq1.2} coincide for $0<H<\frac12$ when $f\in
{\mathscr H}$. In Section~\ref{sec6}, we consider the time-dependent
case, and define the local time of $B^H$ with $0<H<\frac12$ on a
continuous curve.


\section{Preliminaries}\label{sec2}
In this section, we briefly recall some basic definitions and
results of fBm. For more aspects on these material we refer to
Biagini {\it et al}~\cite{BHOZ}, Hu~\cite{Hu2},
Mishura~\cite{Mishura2}, Nualart~\cite{Nua4} and the references
therein. Throughout this paper we assume that $0<H<\frac12$ is
arbitrary but fixed and let $B^H=\{B_t^H, 0\leq t\leq T\}$ be a
one-dimensional fBm with Hurst index $H$ defined on $(\Omega,
\mathcal{F}, P)$. Let $({\mathcal S})^*$ be the Hida space of
stochastic distributions and let $\diamond$ denote the Wick product
on $({\mathcal S})^*$. Then $t\mapsto B_t^H$ is differentiable in
$({\mathcal S})^*$. Denote
$$
W^{(H)}_t=\frac{dB_t^H}{dt}\in ({\mathcal S})^*.
$$
We call $W^{(H)}$ the fractional white noise. For $u: {\mathbb
R}_+\to ({\mathcal S})^*$, in a white noise setting we define its
Wick-It\^o-Skorohod (WIS) stochastic integral with respect to $B^H$
by
\begin{equation}\label{eq2.100}
\int_0^tu_sdB^H_s:=\int_0^tu_s\diamond W^{(H)}_sds,
\end{equation}
whenever the last integral exists as an integral in $({\mathcal
S})^*$. We call these fractional It\^o integrals, because these
integrals share some properties of the classical It\^o integral. The
integral is closed in $L^2$, and moreover, for any $f\in
C^{2,1}({\mathbb R}\times [0,+\infty))$ the follwing It\^o type
formula holds:
\begin{align}\tag*{}
f(B_t^H,t)=f(0,0)+&\int_0^t\frac{\partial}{\partial x}
f(B_s^H,s)dB_s^H\\      \label{eq2.1}
&+\int_0^t\frac{\partial}{\partial
s}f(B_s^H,s)ds+H\int_0^t\frac{\partial^2}{\partial
x^2}f(B_s^H,s)s^{2H-1}ds.
\end{align}
The fBm $B^H$ has a local time ${\mathcal L}^{H}(x,t)$ continuous in
$(x,t)\in {\mathbb R}\times [0,\infty)$ which satisfies the
occupation formula (see Geman-Horowitz~\cite{Geman})
\begin{equation}\label{sec2-1-eq1}
\int_0^t\phi(B_s^{H},s)ds=\int_{\mathbb R}dx\int_0^t\phi(x,s)
{\mathcal L}^{H}(x,ds)
\end{equation}
for every continuous and bounded function $\phi(x,t):{\mathbb
R}\times {\mathbb R}_{+}\rightarrow {\mathbb R}$, and such that
$$
{\mathcal L}^{H}(x,t)=\int_0^t\delta(B_s^H-x)ds=\lim_{\epsilon
\downarrow
0}\frac{1}{2\epsilon}\lambda\big(s\in[0,t],|B_s^H-x|<\epsilon\big),
$$
where $\lambda$ denotes Lebesgue measure and $\delta(x)$ is the
Dirac delta function. Define the so-call weighted local time
${\mathscr L}^{H}(x,t)$ of $B^H$ at $x$ as follows
$$
{\mathscr L}^{H}(x,t)=2H\int_0^ts^{2H-1}{\mathcal L}^{H}(x,ds)\equiv
2H\int_0^t\delta(B_s^H-x)s^{2H-1}ds.
$$
Then the Tanaka formula
\begin{equation}\label{eq2.4}
|B_t^H-x|=|x|+\int_0^t{\rm sign}(B^H_s-x)dB^H_s+{\mathscr
L}^{H}(x,t)
\end{equation}
holds.

For $H\in (0,1)$ we define the operator ${M}$ on $L^2({\mathbb R})$
as follows (see Chapter 4 in Biagini {\it et al}~\cite{BHOZ} and
Elliott-Van der Hoek~\cite{Elli}):
$$
{M}f(x)=-\frac{\beta_H}{H-\frac12}\frac{d}{dx}\int_{\mathbb
R}\frac{(s-x)}{|s-x|^{\frac32-H}}f(s)ds,\qquad f\in L^2({\mathbb
R}),
$$
where $\beta_H$ is a normalizing constant. In particular, for
$H=\frac12$ we have ${M}f(x)=f(x)$, and for $0<H<\frac12$ we have
$$
{M}f(x)=\beta_H\int_{\mathbb
R}\frac{f(x-s)-f(x)}{|s|^{\frac32-H}}ds.
$$
As an example let us recall ${M}{1}_{[a,b]}(x)$, i.e., ${M}f$ when
$f$ is the indicator function of an interval $[a, b]$ with $a<b$. By
Elliott-Van der Hoek~\cite{Elli}, ${M}{1}_{[a,b]}(x)$ can be
calculated explicitly as
\begin{equation}\label{sec2-eq2.5-100}
{M}{1}_{[a,b]}(x)=\frac{\sqrt{\Gamma(2H+1)\sin(\pi
H)}}{2\Gamma(H+\frac12)\cos\left(\frac{\pi}2(H+\frac12)\right)}
\left(\frac{b-x}{|b-x|^{\frac32-H}}
-\frac{a-x}{|a-x|^{\frac32-H}}\right).
\end{equation}
By using the operator $M$ we can give the relation between
fractional and classical white noise (see Chapter 4 in Biagini {\it
et al}~\cite{BHOZ})
$$
W^{(H)}_t={M}W_t,
$$
which leads to
$$
\int_0^Tu_tdB^H_t=\int_{\mathbb R}M\left(u{1}_{[0,T]}\right)_t\delta
B_t,
$$
where $u$ is an adapted process and $\int_{\mathbb R}v_t\delta B_t$
denotes the Skorohod integral with respect to Brownian motion $B$
defined by
$$
\int_{\mathbb R}v_t\delta B_t:=\int_{\mathbb R}v_t\diamond W_tdt.
$$
Let $D^{(H)}_t$ denotes the Hida-Malliavin derivative with respect
to $B^H$. In the classical case $(H=1/2)$ we use the notation $D_t$
for the corresponding Hida-Malliavin derivative (for further
details, see Nualart~\cite{Nua4} and Biagini {\it et
al}~\cite{BHOZ}). We have
$$
D_tF={M}D^{(H)}_tF
$$
and
\begin{equation}\label{sec2-eq2.5-1000}
E\left[F\int_0^Tu_sdB^H_s\right]=E\left[\int_{\mathbb
R}({M}u{1}_{[0,T]})_s({M}D^{(H)}_sF)ds\right]
\end{equation}
for $F\in L^2(P)$.


\section{Some basic estimates}~\label{sec3}
In this section we will establish some technical estimates
associated with fractional Brownian motion with $0<H<\frac12$. For
simplicity throughout this paper we let $C$ stand for a positive
constant depending only on the subscripts and its value may be
different in different appearance, and this assumption is also
adaptable to $c$.

\begin{lemma}\label{lem3.1}
For all $t,s\in [0,T],\;t\geq s$ and $0<H<1$ we have
\begin{equation}\label{eq3.1}
\frac12(2-2^H)s^{2H}(t-s)^{2H}\leq t^{2H}s^{2H}-\mu^2\leq
2s^{2H}(t-s)^{2H},
\end{equation}
where $\mu=E(B^H_tB^H_s)$.
\end{lemma}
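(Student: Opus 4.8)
The plan is to work directly with the explicit covariance. Write $\mu = E(B^H_t B^H_s) = \tfrac12\left(t^{2H} + s^{2H} - (t-s)^{2H}\right)$, so that
\[
t^{2H}s^{2H} - \mu^2 = \left(t^H s^H - \mu\right)\left(t^H s^H + \mu\right).
\]
Since $\mu \geq 0$ for $t \geq s$ (indeed $\mu = \tfrac12(t^{2H}+s^{2H}-(t-s)^{2H})$ and $(t-s)^{2H} \leq t^{2H} \leq t^{2H}+s^{2H}$), and since by Cauchy--Schwarz $\mu \leq t^H s^H$, both factors are nonnegative, and we have the crude bounds $t^H s^H - \mu \leq t^H s^H + \mu \leq 2 t^H s^H$. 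So the whole problem reduces to pinning down $t^H s^H - \mu$ from above and below by a constant multiple of $(t-s)^{2H}$, and then distributing the factor $t^H s^H + \mu$ appropriately: an upper bound $t^H s^H + \mu \leq 2 t^H s^H$ combined with a lower bound on $t^H s^H - \mu$, and a lower bound $t^H s^H + \mu \geq t^H s^H$ (crudely, or sharper) combined with an upper bound on $t^H s^H - \mu$.

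For the upper bound on $t^{2H}s^{2H}-\mu^2$ I would use $t^H s^H + \mu \leq 2 t^H s^H$ together with $t^H s^H - \mu \le t^{2H} s^{2H}/(t^H s^H) $... more honestly: the cleanest route is to bound $t^{2H}s^{2H} - \mu^2 \le t^{2H} s^{2H} - \mu^2$ by noting $\mu \ge \tfrac12(t^{2H} + s^{2H}) - \tfrac12 t^{2H} = \tfrac12 s^{2H}$ is too weak; instead use $\mu^2 \ge (\tfrac12(t^{2H}+s^{2H}-(t-s)^{2H}))^2$ and expand. The identity $t^{2H}s^{2H} - \mu^2 = \tfrac14\left[4t^{2H}s^{2H} - (t^{2H}+s^{2H}-(t-s)^{2H})^2\right]$ is a polynomial in the three quantities $a=t^{2H}, b=s^{2H}, c=(t-s)^{2H}$; one computes $4ab - (a+b-c)^2 = -(a-b)^2 + 2c(a+b) - c^2 = c(2a+2b-c) - (a-b)^2$. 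For the upper bound one drops $-(a-b)^2 \le 0$ and uses $2a+2b-c \le 4a = 4t^{2H}$ (since $b \le a$, $c \ge 0$) to get $t^{2H}s^{2H}-\mu^2 \le t^{2H}(t-s)^{2H}$; but we need $s^{2H}(t-s)^{2H}$, so instead I would factor more carefully, writing $c(2a+2b-c) - (a-b)^2 = c(2a - c) + 2bc - (a-b)^2$ and using concavity of $x \mapsto x^{2H}$ to get $c = (t-s)^{2H} \le t^{2H} - s^{2H} = a - b$ when $2H \le 1$... this controls $(a-b)^2 \ge c^2$, leaving $c(2a+2b-c) - c^2 \cdot(\text{something})$; the key inequality $(t-s)^{2H} \le t^{2H}-s^{2H}$ (valid precisely because $2H<1$) is what makes the estimate work, and combined with $a-b \le 2a(t-s)^{2H}/\! \cdots$ gives the factor $s^{2H}$.

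For the lower bound $\tfrac12(2-2^H)s^{2H}(t-s)^{2H} \le t^{2H}s^{2H}-\mu^2$, the main point is a lower bound on $t^H s^H - \mu$. Here I would set $t = s + r$ with $r = t-s \ge 0$ and study $g(s,r) = (s(s+r))^H - \tfrac12((s+r)^{2H}+s^{2H}-r^{2H})$ as a function of $s \ge r$ (or $s \le r$); by homogeneity we may take $r=1$ and show $g(s,1) \ge \kappa$ for an explicit $\kappa$ times the appropriate power. The worst case should be $s=0$ (giving $-\tfrac12(1+0-1) = 0$, not useful) or $s \to \infty$; a Taylor expansion as $s \to \infty$ of $(s^2+s)^H - \tfrac12(2s^{2H} + \cdots)$ identifies the leading constant, and the factor $(2-2^H)$ in the statement strongly suggests the extremal configuration is $t = 2s$ (i.e. $r = s$), where $t^{2H}s^{2H} - \mu^2 = 2^{2H}s^{4H} - \tfrac14(2^{2H}s^{2H} + s^{2H} - s^{2H})^2 = 2^{2H}s^{4H} - \tfrac14 \cdot 4^{2H}s^{4H} = s^{4H}(2^{2H} - 2^{4H-2})$, and one checks $2^{2H} - 2^{4H-2} = \tfrac12 \cdot 2^{2H}(2 - 2^{2H}) \ge \tfrac12 (2 - 2^H) \cdot 1^{2H} \cdot 1^{2H}$ after matching against $s^{2H}(t-s)^{2H} = s^{4H}$.

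The main obstacle I expect is the lower bound: proving $t^H s^H - \mu \ge c(t-s)^{2H}$ uniformly in $0 \le s \le t$ with the sharp constant $(2-2^H)/ (\text{something})$ requires a genuine minimization over the ratio $s/(t-s)$, most naturally done by reducing to one variable via scaling and then checking monotonicity/convexity of a one-variable function involving $(1+x)^{2H}$, $x^{2H}$, and $(x(1+x))^H$. I would carry this out by setting $\theta = (t-s)/t \in [0,1]$, writing everything as $t^{2H}$ times a function $\Phi(\theta)$, and showing $\tfrac12(2-2^H)\theta^{2H}(1-\theta)^{2H}... \le \Phi(\theta) \le 2\theta^{2H}(1-\theta)^{2H}$ where $\Phi(\theta) = (1-\theta)^{2H} - \tfrac14(1 + (1-\theta)^{2H} - \theta^{2H})^2$; the upper bound is then elementary as sketched, and the lower bound follows by factoring $\Phi(\theta) = \tfrac14(2(1-\theta)^H - 1 - (1-\theta)^{2H} + \theta^{2H})(2(1-\theta)^H + 1 + (1-\theta)^{2H} - \theta^{2H})$ and bounding each factor, using $2(1-\theta)^H \ge 1 + (1-\theta)^{2H}$ (AM--GM) is false in general, so instead bounding the first factor below via $2(1-\theta)^H - (1-\theta)^{2H} - (1-(1-\theta)^{2H} ... )$; the honest estimate will come from $(1-\theta)^{2H} - (1-\theta)^{... }$, i.e. the inequality $1 - x^{2H} \ge c\, x^{?}$, so the routine-but-delicate part is choosing the right elementary inequalities for powers with exponent $2H \in (0,1)$ to land exactly the stated constants.
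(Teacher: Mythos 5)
Your overall reduction is the same as the paper's: scale out $t$ (set $s=xt$, equivalently your $\theta=(t-s)/t$), factor $t^{2H}s^{2H}-\mu^2$ as a difference of squares, and try to bound the one-variable function $\Phi(\theta)=(1-\theta)^{2H}-\tfrac14\bigl(1+(1-\theta)^{2H}-\theta^{2H}\bigr)^2$ between constant multiples of $\theta^{2H}(1-\theta)^{2H}$. But the estimates are never actually carried out, and the two places where you defer or improvise are exactly where the content of the lemma lies. For the upper bound, your sketch hinges on the inequality $(t-s)^{2H}\le t^{2H}-s^{2H}$, which you assert is ``valid precisely because $2H<1$''; in fact for $2H<1$ the subadditivity of $x\mapsto x^{2H}$ gives the \emph{reverse} inequality $t^{2H}-s^{2H}\le (t-s)^{2H}$, and moreover the lemma is stated for all $H\in(0,1)$, so $2H$ may exceed $1$. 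The identity $4ab-(a+b-c)^2=c(2a+2b-c)-(a-b)^2$ with the crude bound $2a+2b-c\le 4a$ only yields $t^{2H}(t-s)^{2H}$, as you yourself note, and the attempt to convert the factor $t^{2H}$ into $s^{2H}$ is never completed.

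For the lower bound you correctly sense that a minimization over the ratio $s/t$ is needed, but you stop at ``choosing the right elementary inequalities\dots to land exactly the stated constants,'' which is precisely the missing idea. The paper supplies it: after writing $G(x)=x^{2H}-\tfrac14[1+x^{2H}-(1-x)^{2H}]^2$ (your $\Phi$ with $x=s/t$), one factors completely into \emph{three} pieces,
\[
G(x)=\tfrac14\{(1-x)^{H}-(1-x^{H})\}\{(1-x)^{H}+1-x^{H}\}\{2x^{H}+x^{2H}+1-(1-x)^{2H}\},
\]
and both bounds then drop out simultaneously from the two-sided refinement of the classical inequality $1-x^{\alpha}\le(1-x)^{\alpha}$, namely
\[
(2-2^{\alpha})x^{\alpha}(1-x)^{\alpha}\le(1-x)^{\alpha}-(1-x^{\alpha})\le x^{\alpha}(1-x)^{\alpha},\qquad 0\le x,\alpha\le 1,
\]
applied with $\alpha=H$ to the first factor, together with the elementary facts $(1-x)^{H}\le(1-x)^{H}+(1-x^{H})\le 2(1-x)^{H}$ and $2x^{H}\le 2x^{H}+x^{2H}+1-(1-x)^{2H}\le 4x^{H}$. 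Without this (or an equivalent) two-sided estimate your argument does not close; with it, the constants $\tfrac12(2-2^{H})$ and $2$ come out immediately, and the proof is valid for the whole range $H\in(0,1)$.
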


By the local nondeterminacy of fBm we can prove the lemma. Here, we
shall use an elementary method to prove it. We shall use the
following inequalities:
\begin{align}\label{eq3.3000-1}
(1+x)^\alpha&\leq 1+(2^\alpha-1)x^\alpha\\ \label{sec3-eq3.6=0}
(2-2^{\alpha})x^{\alpha}(1-x)^{\alpha}&\leq
(1-x)^{\alpha}-(1-x^{\alpha})\leq x^{\alpha}(1-x)^{\alpha}
\end{align}
with $0\leq x,\alpha\leq 1$. The inequality~\eqref{eq3.3000-1} is a
calculus exercise, and it is stronger than the well known
(Bernoulli) inequality
$$
(1+x)^\alpha\leq 1+\alpha x^\alpha\leq 1+x^\alpha,
$$
because $2^\alpha-1\leq \alpha$ for all $0\leq \alpha\leq 1$. The
inequalities~\eqref{sec3-eq3.6=0} are the improvement of the
classical inequality
$$
1-x^{\alpha}\leq (1-x)^{\alpha}.
$$
The right inequality in~\eqref{sec3-eq3.6=0} follows from the fact
$$
(1-x)^{\alpha}(1-x^{\alpha})\leq 1-x^{\alpha}.
$$
For the left inequality in~\eqref{sec3-eq3.6=0},
by~\eqref{eq3.3000-1} we have
\begin{align*}
1=(1-x+x)^{\alpha}&\leq (1-x)^{\alpha}\vee
x^{\alpha}+(2^{\alpha}-1)\left[(1-x)^{\alpha}\wedge
x^{\alpha}\right]
\end{align*}
for $0\leq x\leq 1$, where $x\vee y=\max\{x,y\}$ and $x\wedge
y=\min\{x,y\}$, which deduces
\begin{align*}
(1-x)^{\alpha}-(1-x^{\alpha})&\geq (2-2^{\alpha})(1-x)^{\alpha}
\wedge x^{\alpha}\\
&\geq (2-2^{\alpha})(1-x)^{\alpha}x^{\alpha}.
\end{align*}

\begin{proof}[Proof of~\eqref{eq3.1}]
Take $s=xt, 0\leq x\leq 1$. Then we can rewrite
$\rho_{r,s}:=t^{2H}s^{2H}-\mu^2$ as
\begin{align*}
\rho_{r,s}&=t^{4H}\left\{x^{2H}-\frac14\left[1+x^{2H}-
(1-x)^{2H}\right]^2\right\}\\
&\equiv t^{4H}G(x).
\end{align*}
In order to show the lemma we claim that
\begin{equation}\label{eq3.2}
\frac12(2-2^H)x^{2H}(1-x)^{2H}\leq G(x)\leq 2x^{2H}(1-x)^{2H}
\end{equation}
for all $x\in [0,1]$. We have
\begin{align*}
G(x)&=x^{2H}-\frac14\left[1+x^{2H}- (1-x)^{2H}\right]^2\\
&=\frac14\left\{2x^{H}-\left(1+x^{2H}- (1-x)^{2H}\right)\right\}
\left\{2x^{H}+\left(1+x^{2H}- (1-x)^{2H}\right)\right\}\\
&=\frac14\left\{(1-x)^{2H}-(1-x^{H})^2\right\}
\left\{2x^{H}+x^{2H}+1- (1-x)^{2H}\right\}\\
&=\frac14\left\{(1-x)^{H}-(1-x^{H})\right\}
\left\{(1-x)^{H}+1-x^{H}\right\}\left\{2x^{H}+x^{2H}+1-
(1-x)^{2H}\right\}.
\end{align*}
Thus,~\eqref{eq3.2} follows from~\eqref{sec3-eq3.6=0} and the facts
\begin{align*}
(1-x)^{H}\leq (1-x)^{H}&+(1-x^{H})\leq 2(1-x)^H,\\
2x^H\leq 2x^{H}+x^{2H}+1-& (1-x)^{2H}\leq 4x^{H}.
\end{align*}
This completes the proof.
\end{proof}

\begin{lemma}\label{lem3.2}
For all $t,s\in [0,T],\;t\geq s$ and $0<H<\frac12$ we have
\begin{equation}\label{eq3.3}
\frac12(t-s)^{2H}\leq t^{2H}-\mu\leq (t-s)^{2H},
\end{equation}
and
\begin{equation}\label{eq3.4}
\frac12(2-2^H)(\frac{s}{t})^{2H}(t-s)^{2H}\leq s^{2H}-\mu\leq
\frac12(\frac{s}{t})^{2H}(t-s)^{2H},
\end{equation}
where $\mu=E(B^H_tB^H_s)$.
\end{lemma}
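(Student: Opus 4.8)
The plan is to avoid probability entirely and argue from the closed form of the covariance. For $t\ge s\ge 0$ one has $\mu=E(B^H_tB^H_s)=\frac12\bigl[t^{2H}+s^{2H}-(t-s)^{2H}\bigr]$, and therefore the two exact algebraic identities
\begin{equation*}
t^{2H}-\mu=\frac12\bigl[(t-s)^{2H}+(t^{2H}-s^{2H})\bigr],\qquad
s^{2H}-\mu=\frac12\bigl[(t-s)^{2H}-(t^{2H}-s^{2H})\bigr].
\end{equation*}
Thus both \eqref{eq3.3} and \eqref{eq3.4} reduce to comparing the power increment $t^{2H}-s^{2H}$ with $(t-s)^{2H}$, and the whole proof is just a rescaled form of the scalar inequalities recorded just before Lemma~\ref{lem3.1}, specialized at $x=s/t\in[0,1]$.

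For \eqref{eq3.3}: the left inequality is pure monotonicity, $t\ge s\Rightarrow t^{2H}-s^{2H}\ge 0$, which gives $t^{2H}-\mu\ge\frac12(t-s)^{2H}$; the right inequality is the elementary subadditivity estimate $1-x^{2H}\le(1-x)^{2H}$ (the classical inequality sharpened by \eqref{sec3-eq3.6=0}), which after multiplication by $t^{2H}$ reads $t^{2H}-s^{2H}\le(t-s)^{2H}$, hence $t^{2H}-\mu\le(t-s)^{2H}$.

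For \eqref{eq3.4}: first factor out $t^{2H}$,
\begin{equation*}
s^{2H}-\mu=\frac12 t^{2H}\bigl[(1-x)^{2H}+x^{2H}-1\bigr]=\frac12 t^{2H}\bigl[(1-x)^{2H}-(1-x^{2H})\bigr],\qquad x=\frac{s}{t},
\end{equation*}
and then apply \eqref{sec3-eq3.6=0} with exponent $\alpha=2H\in(0,1)$, which sandwiches $(1-x)^{2H}-(1-x^{2H})$ between $(2-2^{2H})x^{2H}(1-x)^{2H}$ and $x^{2H}(1-x)^{2H}$; multiplying through by $\frac12 t^{2H}$ and using $t^{2H}x^{2H}(1-x)^{2H}=(s/t)^{2H}(t-s)^{2H}$ yields \eqref{eq3.4} at once (with $2-2^{2H}$ as the constant produced on the left). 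The only point worth flagging is that the upper bound here needs only the trivial $(1-x)^{2H}\le 1$, whereas the lower bound rests on the nontrivial left half of \eqref{sec3-eq3.6=0} — and hence on \eqref{eq3.3000-1} — so the real work has already been done in the preamble to Lemma~\ref{lem3.1}; the lemma itself is then a matter of bookkeeping. A less direct alternative would be to combine \eqref{eq3.3} with the identity $(t^{2H}-\mu)(s^{2H}-\mu)=(t^{2H}s^{2H}-\mu^2)-\mu(t-s)^{2H}$ and the bounds of \eqref{eq3.1}, but the computation above is shorter and keeps every constant explicit.
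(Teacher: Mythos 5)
Your proof is correct and follows essentially the same route as the paper's: the exact identity $t^{2H}-\mu=\frac12\bigl[(t^{2H}-s^{2H})+(t-s)^{2H}\bigr]$ for~\eqref{eq3.3}, and the factorization $s^{2H}-\mu=\frac12 t^{2H}\bigl[(1-x)^{2H}-(1-x^{2H})\bigr]$ with $x=s/t$ followed by~\eqref{sec3-eq3.6=0} with $\alpha=2H$ for~\eqref{eq3.4}. Your remark that this argument produces the constant $2-2^{2H}$ rather than the stated $2-2^{H}$ on the left of~\eqref{eq3.4} is accurate and worth flagging: the paper's own one-line appeal to~\eqref{sec3-eq3.6=0} yields exactly the same (slightly smaller) constant, so this is a harmless discrepancy in the statement rather than a gap in your argument.
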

\begin{proof}
The inequalities~\eqref{eq3.3} follow from
\begin{align*}
t^{2H}-\mu&=t^{2H}-\frac12\left(t^{2H}+s^{2H}-(t-s)^{2H}\right)\\
&=\frac12\left(t^{2H}-s^{2H}\right)+\frac12(t-s)^{2H}.
\end{align*}
In order to show that~\eqref{eq3.4}, we have
\begin{align*}
s^{2H}-\mu&=s^{2H}-\frac12\left(t^{2H}+s^{2H}-(t-s)^{2H}\right)\\
&=\frac12t^{2H}\left\{\left(1-\frac{s}{t}\right)^{2H}
-\left(1-(\frac{s}{t})^{2H}\right) \right\}.
\end{align*}
Thus, the inequalities~\eqref{eq3.4} follow
from~\eqref{sec3-eq3.6=0}. This completes the proof.
\end{proof}
\begin{lemma}\label{lem3.5}
For $0<H<\frac12$ we have
\begin{equation}\label{eq3.6}
\left|E\left[(B^H_t-B^H_s)(B^H_{t'}-B^H_{s'})\right]\right|\leq
C_H\frac{(t-s)^{2H}(t'-s')^{2H}}{(s-t')^{2H}}
\end{equation}
for all $0<s'<t'<s<t$.
\end{lemma}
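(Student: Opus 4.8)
The plan is to write the covariance $E[(B^H_t-B^H_s)(B^H_{t'}-B^H_{s'})]$ explicitly using the covariance function of fBm and then exploit the concavity of $x\mapsto x^{2H}$ (valid since $0<H<1/2$ gives $2H<1$). First I would expand
$$
E\left[(B^H_t-B^H_s)(B^H_{t'}-B^H_{s'})\right]
=\tfrac12\Big[(t-s')^{2H}+(s-t')^{2H}-(t-t')^{2H}-(s-s')^{2H}\Big].
$$
Writing $a=s-t'>0$, and introducing the increments $h=t-s>0$ and $k=t'-s'>0$, the four arguments become $a+h+k$, $a$, $a+h$, $a+k$, so the bracket equals
$$
(a+h+k)^{2H}-(a+h)^{2H}-(a+k)^{2H}+a^{2H}.
$$
This is a second-order mixed difference of the function $\varphi(x)=x^{2H}$, and since $\varphi$ is $C^2$ on $(0,\infty)$ with $\varphi''(x)=2H(2H-1)x^{2H-2}<0$, the mean value theorem applied twice gives
$$
(a+h+k)^{2H}-(a+h)^{2H}-(a+k)^{2H}+a^{2H}=\varphi''(\xi)\,hk
$$
for some $\xi\in(a,a+h+k)$; hence in absolute value it is bounded by $2H(1-2H)\,\xi^{2H-2}hk\le 2H(1-2H)a^{2H-2}hk$, because $2H-2<0$ and $\xi\ge a$. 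Recalling $a=s-t'$, $h=t-s$, $k=t'-s'$, this is exactly the claimed bound with $C_H=H(1-2H)$.

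The only mild subtlety is ensuring that all the arguments of $\varphi$ are strictly positive so that the $C^2$-estimate is legitimate: this is guaranteed by the strict ordering $0<s'<t'<s<t$, which makes $a=s-t'>0$ (and indeed $a+h$, $a+k$, $a+h+k$ are all positive as well). Alternatively, and perhaps more in the spirit of the elementary estimates used in Lemmas 3.1 and 3.2, one can avoid differentiation entirely: write the mixed difference as
$$
\big[(a+h+k)^{2H}-(a+k)^{2H}\big]-\big[(a+h)^{2H}-a^{2H}\big],
$$
bound each bracket using $(u+h)^{2H}-u^{2H}\le 2H\,u^{2H-1}h$ (concavity), and subtract; the difference of the two ``derivative'' factors $2H(a+k)^{2H-1}h-2Ha^{2H-1}h$ is itself controlled by $2H(1-2H)a^{2H-2}k\cdot h$ via the same concavity/monotonicity applied to $u\mapsto u^{2H-1}$. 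Either route yields the same constant.

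I expect no real obstacle here; the main thing to get right is the bookkeeping of which difference of which pair produces the factors $(t-s)^{2H}$ and $(t'-s')^{2H}$ in the numerator versus the factor $(s-t')^{2H}$ in the denominator, and to state clearly why $\xi\ge s-t'$ lets one replace $\xi^{2H-2}$ by $(s-t')^{2H-2}$. One should also remark that the sign of the covariance is negative (the increments of fBm with $H<1/2$ are negatively correlated when the intervals are disjoint), though only the absolute value is needed for the statement.
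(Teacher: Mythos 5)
There is a genuine gap at the very last step. Your mean-value (second-difference) argument is correct and is in fact exactly how the paper begins: it yields
$$
\left|E\left[(B^H_t-B^H_s)(B^H_{t'}-B^H_{s'})\right]\right|\leq
C_H\,\frac{(t-s)(t'-s')}{(s-t')^{2-2H}},
$$
but this is \emph{not} ``exactly the claimed bound'': the lemma asks for $(t-s)^{2H}(t'-s')^{2H}(s-t')^{-2H}$, and your estimate has exponents $1$, $1$ and $-(2-2H)$ instead of $2H$, $2H$ and $-2H$. The two bounds are incomparable, and yours does not imply the lemma: the ratio of your bound to the target is $\bigl((t-s)(t'-s')/(s-t')^2\bigr)^{1-2H}$, which blows up when $s-t'$ is small relative to the increment lengths (your bound diverges like $(s-t')^{2H-2}$ as $t'\uparrow s$, whereas the asserted bound diverges only like $(s-t')^{-2H}$). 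Since the lemma is used precisely in regimes where the gap $s-t'$ can be much smaller than $t-s$ and $t'-s'$, this discrepancy is not cosmetic.

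The missing idea is an interpolation with the trivial Cauchy--Schwarz estimate
$\left|E\left[(B^H_t-B^H_s)(B^H_{t'}-B^H_{s'})\right]\right|\leq (t-s)^{H}(t'-s')^{H}$,
which costs nothing in $(s-t')$. Writing $|E|=|E|^{1-\alpha}|E|^{\alpha}$ and using the Cauchy--Schwarz bound for the factor with exponent $1-\alpha$ and your mean-value bound for the factor with exponent $\alpha$ gives
$$
\left|E\left[(B^H_t-B^H_s)(B^H_{t'}-B^H_{s'})\right]\right|\leq
\frac{(t-s)^{(1-\alpha)H+\alpha}\,(t'-s')^{(1-\alpha)H+\alpha}}{(s-t')^{\alpha(2-2H)}},
$$
and the choice $\alpha=H/(1-H)\in(0,1)$ makes all three exponents equal to $2H$. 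This is exactly the paper's argument; with that one additional step your proof is complete.
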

Moreover, the estimate~\eqref{eq3.6} holds also for all
$0<s'<s<t'<t$. In fact we have
\begin{align*}
(t'-s)^{4H}&=(t'-s)^{2H}(t'-s)^{2H}\leq (t-s)^{2H}(t'-s')^{2H},\\
(t-t')^{2H}(t'-s)^{2H}&\leq (t-s)^{2H}(t'-s')^{2H},\\
(s-s')^{2H}(t'-s)^{2H}&\leq (t'-s')^{2H}(t-s)^{2H},\\
(t-s')^{2H}&=\left\{(t-s)+(s-s')\right\}^{2H}\leq
(t-s)^{2H}+(s-s')^{2H}\\
&=\frac{(t-s)^{2H}(t'-s)^{2H}+(s-s')^{2H}(t'-s)^{2H}}{(t'-s)^{2H}}\\
&\leq 2\frac{(t-s)^{2H}(t'-s')^{2H}}{(t'-s)^{2H}},
\end{align*}
which gives
\begin{align*}
|E\left[(B^H_t-B^H_s)(B^H_{t'}-B^H_{s'})\right]|&=\frac12\left\{
|t-s'|^{2H}+|s-t'|^{2H}-|t-t'|^{2H}-|s-s'|^{2H}\right\}\\
&\leq 3 \frac{(t-s)^{2H}(t'-s')^{2H}}{(t'-s)^{2H}}.
\end{align*}

\begin{proof}[Proof of~\eqref{eq3.6}]
For $0<s'<t'<s<t\leq T$ we define the function $x\mapsto G_{s,t}(x)$
on $[s',t']$ by
$$
G_{s,t}(x)=(s-x)^{2H}-(t-x)^{2H}.
$$
Thanks to mean value theorem, we see that there are $\xi\in (s',t')$
and $\eta\in (s,t)$ such that
\begin{align*}
2E\left[(B^H_t-B^H_s)(B^H_{t'}-B^H_{s'})\right]
&=G_{s,t}(t')-G_{s,t}(s')\\
&=2H(t'-s')\left[(t-\xi)^{2H-1}-(s-\xi)^{2H-1}\right]\\
&=2H(2H-1)(t'-s')(t-s)\left(\eta-\xi\right)^{2H-2}\leq 0,
\end{align*}
which gives
\begin{align}\label{eq3-00000}
|E\left[(B^H_t-B^H_s)(B^H_{t'}-B^H_{s'})\right]|\leq
\frac{(t'-s')(t-s)}{(s-t')^{2-2H}}.
\end{align}
On the other hand, noting that
$$
\frac{|E\left[(B^H_t-B^H_s)(B^H_{t'}-B^H_{s'})\right]|}{
(t-s)^H(t'-s')^H} \leq 1,
$$
we see that
\begin{align*}
&\frac{|E[(B^H_t-B^H_s)(B^H_{t'}-B^H_{s'})]|}{(t-s)^H(t'-s')^H}\leq
\left(\frac{|E\left[(B^H_t-B^H_s)(B^H_{t'}-B^H_{s'})\right]|}{
(t-s)^H(t'-s')^H}\right)^\alpha
\end{align*}
for all $\alpha\in [0,1]$. Combining this with~\eqref{eq3-00000}, we
get
\begin{align*}
|E[(B^H_t-B^H_s)&(B^H_{t'}-B^H_{s'})]|\leq
\frac{(t-s)^{(1-\alpha)H+\alpha}
(t'-s')^{(1-\alpha)H+\alpha}}{(s-t')^{\alpha(2-2H)}},
\end{align*}
and the lemma follows by taking $\alpha=H/(1-H)$.
\end{proof}
\begin{lemma}\label{lem3.6}
For $0<H<\frac12$ we have
\begin{align*}
&\left|E\left[B^H_t(B^H_{t}-B^H_{s})\right]\right|\leq (t-s)^{2H},\\
&\left|E\left[B^H_t(B^H_{s}-B^H_{r})\right]\right|\leq (s-r)^{2H},\\
&\left|E\left[B^H_r(B^H_{t}-B^H_{s})\right]\right|\leq (t-s)^{2H}
\end{align*}
for all $t>s>r>0$.
\end{lemma}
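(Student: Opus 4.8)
The plan is to reduce all three bounds to the covariance formula $E[B^H_uB^H_v]=\tfrac12(u^{2H}+v^{2H}-|u-v|^{2H})$ together with the elementary subadditivity inequality
\[
a^{2H}-b^{2H}\le (a-b)^{2H},\qquad a\ge b\ge 0,
\]
which is immediate from $a^{2H}=((a-b)+b)^{2H}\le (a-b)^{2H}+b^{2H}$ (and is a special case of the Bernoulli-type inequality recalled after \eqref{eq3.3000-1}), and — for the first bound — to Lemma~\ref{lem3.2}.

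First I would note that $E[B^H_t(B^H_t-B^H_s)]=t^{2H}-\mu$ with $\mu=E[B^H_tB^H_s]$, which is nonnegative and at most $(t-s)^{2H}$ by the right-hand estimate in \eqref{eq3.3}; this gives $\left|E[B^H_t(B^H_t-B^H_s)]\right|\le (t-s)^{2H}$ at once.

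For the second and third inequalities I would expand the covariance directly, obtaining
\[
E[B^H_t(B^H_s-B^H_r)]=\tfrac12\big[(s^{2H}-r^{2H})+((t-r)^{2H}-(t-s)^{2H})\big]
\]
and
\[
E[B^H_r(B^H_t-B^H_s)]=\tfrac12\big[(t^{2H}-s^{2H})-((t-r)^{2H}-(s-r)^{2H})\big].
\]
In the first of these, since $t>s>r$ both bracketed differences are nonnegative, and applying the subadditivity estimate to each (in both cases the relevant difference of bases equals $s-r$) gives $0\le E[B^H_t(B^H_s-B^H_r)]\le \tfrac12\big((s-r)^{2H}+(s-r)^{2H}\big)=(s-r)^{2H}$. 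In the second, the right-hand side is a difference of two nonnegative quantities, each of which is at most $(t-s)^{2H}$ by subadditivity (for the subtracted one take $a=t-r$, $b=s-r$, so $a-b=t-s$); hence $\left|E[B^H_r(B^H_t-B^H_s)]\right|\le \tfrac12(t-s)^{2H}\le (t-s)^{2H}$.

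The whole argument is elementary and requires no deeper input such as local nondeterminism or Lemma~\ref{lem3.5}; the only point that needs care is the sign bookkeeping after the expansion — checking that each covariance is a combination of terms $a^{2H}-b^{2H}$ with $a>b>0$ for which $a-b$ equals precisely the increment length appearing on the right-hand side — which is exactly what makes the stated constant $1$ come out (indeed the argument yields the sharper bound $\tfrac12(t-s)^{2H}$ in the third case).
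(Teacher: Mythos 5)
Your proof is correct. The paper states Lemma~\ref{lem3.6} without proof, so there is nothing to compare against line by line; your argument is the natural elementary one, and it is consistent with the paper's toolkit: the first estimate is exactly the right-hand inequality of \eqref{eq3.3} in Lemma~\ref{lem3.2}, and the other two follow, as you show, from expanding the covariance and applying the subadditivity $a^{2H}-b^{2H}\le (a-b)^{2H}$ (valid since $0<2H<1$), with the sign bookkeeping done correctly in each case.
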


Let $\varphi(x,y)$ be the density function of $(B^H_s,B^H_r)$
($s>r>0$). That is
$$
\varphi(x,y)=\frac1{2\pi\rho}\exp\left\{-\frac{1}{2\rho^2}\left(
r^{2H}x^2-2\mu xy+s^{2H}y^2\right)\right\},
$$
where $\mu=E(B^H_sB^H_r)$ and $\rho^2=r^{2H}s^{2H}-\mu^2$.
\begin{lemma}\label{lem3.3}
Let $f\in C^1({\mathbb R})$ admit compact support. Then we have
\begin{align*}
|E\left[f'(B^H_{s})f'(B^H_{r})\right]|&\leq
\frac{C_Hs^H}{r^{H}(s-r)^{2H}}\left(E\left[|f(B^H_s)|^2 \right]
E\left[|f(B^H_r)|^2\right]\right)^{1/2}
\end{align*}
for all $s>r>0$ and $0<H<\frac12$.
\end{lemma}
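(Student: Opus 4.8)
The plan is to normalise the two Gaussian variables, expand $E[g'(U)h'(V)]$ by Mehler's formula, move both derivatives onto the Hermite rank (which costs exactly one power of the rank), and notice that this extra factor is absorbed into $(1-|\rho|)^{-1}$, where $\rho$ is the correlation coefficient of $(B^H_s,B^H_r)$; by Lemma~\ref{lem3.1}, $1-|\rho|$ is comparable to $s^{-2H}(s-r)^{2H}$, which is precisely what the statement requires.

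First I would set $U=B^H_s/s^H$ and $V=B^H_r/r^H$, so that $(U,V)$ is a centred Gaussian vector with unit variances and correlation $\rho=\mu/(s^Hr^H)$, where $\mu=E[B^H_sB^H_r]$; note $|\rho|<1$ because $s^{2H}r^{2H}-\mu^2>0$ by Lemma~\ref{lem3.1}. Put $g(u)=f(s^Hu)$ and $h(v)=f(r^Hv)$, which lie in $C^1(\mathbb R)$ with compact support; then $f'(B^H_s)=s^{-H}g'(U)$, $f'(B^H_r)=r^{-H}h'(V)$, so that
\begin{equation*}
E\left[f'(B^H_s)f'(B^H_r)\right]=\frac{1}{s^Hr^H}E\left[g'(U)h'(V)\right].
\end{equation*}
Since $g'$ and $h'$ are bounded, Mehler's formula gives $E[g'(U)h'(V)]=\sum_{n\ge0}\rho^n a_n(g')a_n(h')$, where $a_n(\varphi)=\tfrac{1}{\sqrt{n!}}\int_{\mathbb R}\varphi(u)H_n(u)\gamma(du)$ are the normalised Hermite coefficients, $\gamma$ being the standard Gaussian law and $H_n$ the Hermite polynomials, and $\sum_na_n(\varphi)^2=\|\varphi\|_{L^2(\gamma)}^2$. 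Integrating by parts and using $\tfrac{d}{du}(H_n(u)\gamma(u))=-H_{n+1}(u)\gamma(u)$ (the boundary terms vanish since $g$ has compact support) yields $a_n(g')=\sqrt{n+1}\,a_{n+1}(g)$, and likewise for $h$; hence
\begin{equation*}
E\left[f'(B^H_s)f'(B^H_r)\right]=\frac{1}{s^Hr^H}\sum_{m\ge1}m\rho^{m-1}a_m(g)a_m(h).
\end{equation*}

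Then I would estimate the series using the elementary bound $\sup_{m\ge1}m|z|^{m-1}\le C(1-|z|)^{-1}$ for $|z|<1$ (a one-line calculus check) together with Cauchy--Schwarz in $\ell^2$ and Parseval's identity:
\begin{equation*}
\left|E\left[f'(B^H_s)f'(B^H_r)\right]\right|\le\frac{C}{s^Hr^H(1-|\rho|)}\Big(\sum_ma_m(g)^2\Big)^{1/2}\Big(\sum_ma_m(h)^2\Big)^{1/2}=\frac{C}{s^Hr^H(1-|\rho|)}\left(E[|f(B^H_s)|^2]E[|f(B^H_r)|^2]\right)^{1/2},
\end{equation*}
where I used $\|g\|_{L^2(\gamma)}^2=E[f(s^HU)^2]=E[|f(B^H_s)|^2]$ and similarly for $h$. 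To finish, apply the left inequality of~\eqref{eq3.1} with the pair $(s,r)$ in place of $(t,s)$: $1-|\rho|\ge\tfrac12(1-\rho^2)=\tfrac{s^{2H}r^{2H}-\mu^2}{2s^{2H}r^{2H}}\ge\tfrac{2-2^H}{4}\,\tfrac{(s-r)^{2H}}{s^{2H}}$, so that $(1-|\rho|)^{-1}\le\tfrac{4}{2-2^H}\,s^{2H}(s-r)^{-2H}$; inserting this into the previous display gives the claim with $C_H=\tfrac{4C}{2-2^H}$.

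The only delicate point — and the reason the bound carries $(s-r)^{-2H}$ and not the weaker $(s-r)^{-4H}$ that a crude estimate produces — is that in the last step one must bound $m|\rho|^{m-1}$ termwise by $C(1-|\rho|)^{-1}$ before summing, rather than using $\sum_{m\ge1}m|\rho|^{m-1}=(1-|\rho|)^{-2}$. The remaining ingredients (the normalisation, Mehler's formula, the Hermite integration by parts, and the final appeal to Lemma~\ref{lem3.1}) are all routine.
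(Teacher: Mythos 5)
Your proof is correct, but it takes a genuinely different route from the paper's. The paper works directly with the explicit bivariate Gaussian density $\varphi(x,y)$ of $(B^H_s,B^H_r)$: it integrates by parts twice to write $E[f'(B^H_s)f'(B^H_r)]=\int f(x)f(y)\,\partial_x\partial_y\varphi\,dxdy$, expands $\partial_x\partial_y\varphi=\{\rho^{-4}(s^{2H}y-\mu x)(r^{2H}x-\mu y)+\mu\rho^{-2}\}\varphi$ (where the paper's $\rho^2$ denotes $r^{2H}s^{2H}-\mu^2$, not your correlation coefficient --- beware the clash of notation), computes the relevant second moments exactly, and applies Cauchy--Schwarz together with the lower bound of Lemma~\ref{lem3.1} on $r^{2H}s^{2H}-\mu^2$. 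Your argument instead normalises to a standard bivariate pair, expands via Mehler's formula, shifts the derivatives onto the Hermite index via $a_n(g')=\sqrt{n+1}\,a_{n+1}(g)$, and uses the termwise bound $\sup_m m|\rho|^{m-1}\le C(1-|\rho|)^{-1}$ plus Parseval; the final appeal to Lemma~\ref{lem3.1} to control $1-|\rho|$ plays the same role in both proofs. All the steps check out (the boundary terms in the Hermite integration by parts vanish, the calculus bound on $m|\rho|^{m-1}$ is right, and your observation that one must take the supremum termwise rather than sum the series is exactly the point that saves the exponent from degrading to $(s-r)^{-4H}$). What each approach buys: the paper's computation is elementary and self-contained, and its explicit moment identities are recycled verbatim in the proof of Lemma~\ref{lem3.4}; your chaos-expansion argument is cleaner and more conceptual, makes the source of the singular factor $(1-|\rho|)^{-1}$ transparent, and adapts with one more derivative shift ($a_n(g'')=\sqrt{(n+1)(n+2)}\,a_{n+2}(g)$) to give Lemma~\ref{lem3.4} as well.
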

\begin{proof}
Elementary calculation shows that
\begin{align*}
\int_{\mathbb{R}^2}f^2(y)&(x-\frac{\mu}{r^{2H}}
y)^2\varphi(x,y)dxdy\\
&=\frac{\rho^2}{r^{2H}}\int_{\mathbb{R}}f^2(y)
\frac{1}{\sqrt{2\pi}r^{H}}
e^{-\frac{y^2}{2r^{2H}}}dy=\frac{\rho^2}{r^{2H}}
E\left[|f(B^H_r)|^2\right],
\end{align*}
which implies that
\begin{align*}
\frac1{\rho^4}\int_{\mathbb{R}^2}|f(x)f(y)
(s^{2H}y-&\mu x)(r^{2H}x-\mu y)|\varphi(x,y)dxdy\\
&\leq \frac{r^{H}s^{H}}{\rho^2}\left(
E\left[|f(B^H_s)|^2\right]E\left[|f(B^H_r)|^2\right]\right)^{1/2}\\
&\leq \frac{C_Hs^H}{r^{H}(s-r)^{2H}}\left(
E\left[|f(B^H_s)|^2\right]E\left[|f(B^H_r)|^2\right]\right)^{1/2}
\end{align*}
by Lemma~\ref{lem3.1}. It follows that
\begin{align*}
|E[f'(B^H_{s})&f'(B^H_{r})]|=|\int_{\mathbb{R}^2}
f(x)f(y)\frac{\partial^{2}}{\partial x\partial
y}\varphi(x,y)dxdy|\\
&=|\int_{\mathbb{R}^2} f(x)f(y)\left\{\frac1{\rho^4}(s^{2H}y-\mu
x)(r^{2H}x-\mu y)+\frac{\mu}{\rho^2}\right\}\varphi(x,y)dxdy|\\
&\leq \frac{C_Hs^H}{r^{H}(s-r)^{2H}}\left(
E\left[|f(B^H_s)|^2\right]E\left[|f(B^H_r)|^2\right]\right)^{1/2}.
\end{align*}
This completes the proof.
\end{proof}

\begin{lemma}\label{lem3.4}
Let $f\in C^2({\mathbb R})$ admit compact support. Then we have
\begin{align*}
|E\left[f''(B^H_{s})f(B^H_{r})\right]|&\leq
\frac{C_H}{(s-r)^{2H}}\left( E\left[|f(B^H_s)|^2\right]E
\left[|f(B^H_r)|^2\right]\right)^{1/2}
\end{align*}
for all $s>r>0$ and $0<H<\frac12$.
\end{lemma}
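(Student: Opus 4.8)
The plan is to follow the pattern of the proof of Lemma~\ref{lem3.3}. Write the expectation against the density $\varphi$ of $(B^H_s,B^H_r)$ and integrate by parts twice in the first variable; since $f\in C^2$ has compact support the boundary terms vanish, and one is left with
\begin{align*}
E\left[f''(B^H_s)f(B^H_r)\right]=\int_{\mathbb R^2}f(x)f(y)\frac{\partial^2}{\partial x^2}\varphi(x,y)\,dxdy .
\end{align*}
From the explicit Gaussian form of $\varphi$ a direct computation gives
\begin{align*}
\frac{\partial^2}{\partial x^2}\varphi(x,y)=\left[\frac{(r^{2H}x-\mu y)^2}{\rho^4}-\frac{r^{2H}}{\rho^2}\right]\varphi(x,y),
\end{align*}
so it suffices to estimate the two integrals coming from the two summands in the bracket, and in both cases the target factor $(s-r)^{-2H}$ comes from Lemma~\ref{lem3.1}: applying that lemma to the pair $(s,r)$ gives $\rho^2=s^{2H}r^{2H}-\mu^2\ge\frac12(2-2^H)r^{2H}(s-r)^{2H}$, hence $r^{2H}/\rho^2\le C_H/(s-r)^{2H}$.

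The summand $-r^{2H}/\rho^2$ is immediate: by Cauchy--Schwarz $\left|\int_{\mathbb R^2}f(x)f(y)\varphi\,dxdy\right|=\left|E[f(B^H_s)f(B^H_r)]\right|\le\left(E|f(B^H_s)|^2E|f(B^H_r)|^2\right)^{1/2}$, and the bound on $r^{2H}/\rho^2$ above closes this term.

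The real work is the summand $(r^{2H}x-\mu y)^2/\rho^4$, and I expect this to be the one delicate point. One cannot replace $(r^{2H}x-\mu y)^2$ by a constant, and — unlike Lemma~\ref{lem3.3}, which featured only the \emph{mixed} second moment $(s^{2H}y-\mu x)(r^{2H}x-\mu y)$, amenable to a single Cauchy--Schwarz — one also cannot bound $\int_{\mathbb R^2}|f(x)|^2(r^{2H}x-\mu y)^2\varphi\,dxdy$ by a multiple of $E|f(B^H_s)|^2$, since the weight grows at infinity while $f$ is compactly supported. The remedy is a two-stage Cauchy--Schwarz using one extra Gaussian moment. Conditionally on $B^H_r=y$, the law of $B^H_s$ is $N(\mu y/r^{2H},\,\rho^2/r^{2H})$, so $r^{2H}x-\mu y$ is (up to the factor $r^{2H}$) the centred conditional variable; an elementary computation, parallel to the identity $\int_{\mathbb R^2}f^2(y)(x-\mu y/r^{2H})^2\varphi\,dxdy=\frac{\rho^2}{r^{2H}}E|f(B^H_r)|^2$ used in the proof of Lemma~\ref{lem3.3} but taken with the fourth power and the Gaussian relation $E[N^4]=3(E[N^2])^2$, yields $\int_{\mathbb R}(r^{2H}x-\mu y)^4\varphi(x,y)\,dx=3(r^{2H}\rho^2)^2\varphi_r(y)$, where $\varphi_r$ is the density of $B^H_r$. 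Applying Cauchy--Schwarz in $x$ against $\varphi(\cdot,y)$, and then in $y$ against $\varphi_r$, one obtains
\begin{align*}
\frac{1}{\rho^4}\int_{\mathbb R^2}|f(x)f(y)|(r^{2H}x-\mu y)^2\varphi(x,y)\,dxdy\le\frac{\sqrt3\,r^{2H}}{\rho^2}\left(E|f(B^H_s)|^2E|f(B^H_r)|^2\right)^{1/2}.
\end{align*}
A final use of Lemma~\ref{lem3.1} bounds $r^{2H}/\rho^2$ by $C_H/(s-r)^{2H}$, and adding the two estimates proves Lemma~\ref{lem3.4}, e.g.\ with $C_H=\tfrac{2(1+\sqrt3)}{2-2^H}$. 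Everything except the two-stage splitting is routine Gaussian integration by parts.
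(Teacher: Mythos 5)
Your proposal is correct and follows essentially the same route as the paper's proof: integrate by parts twice against the Gaussian density, use the fourth-moment identity $\int(r^{2H}x-\mu y)^4\varphi(x,y)\,dx=3(r^{2H}\rho^2)^2\varphi_r(y)$ together with Cauchy--Schwarz to control the quadratic-weight term, and bound $r^{2H}/\rho^2$ by $C_H(s-r)^{-2H}$ via Lemma~\ref{lem3.1}. Your write-up merely makes explicit the two-stage Cauchy--Schwarz that the paper compresses into the phrase ``by Cauchy's inequality and Lemma~\ref{lem3.1}''.
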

\begin{proof}
A straightforward calculation shows that
\begin{align*}
\int_{\mathbb{R}^2}f^2(y)(x-\frac{\mu}{r^{2H}} y)^4\varphi(x,y)dxdy
&=\frac{3\rho^4}{r^{4H}}\int_{\mathbb{R}}f^2(y)
\frac{1}{\sqrt{2\pi}r^{H}} e^{-\frac{y^2}{2r^{2H}}}dy,
\end{align*}
which deduces
\begin{align*}
\frac1{\rho^4}\int_{\mathbb{R}^2}f(x)f(y)
(r^{2H}x-\mu y)^2&\varphi(x,y)dxdy\\
&\leq \frac{C_H}{(s-r)^{2H}}\sqrt{E\left[|f(B^H_s)|^2\right]
E\left[|f(B^H_r)|^2\right]}
\end{align*}
by Cauchy's inequality and Lemma~\ref{lem3.1}. It follows that
\begin{align*}
|E[f''(B^H_{s})&f(B^H_{r})]|=|\int_{\mathbb{R}^2}
f(x)f(y)\frac{\partial^{2}}{\partial x^2}
\varphi(x,y)dxdy|\\
&=|\int_{\mathbb{R}^2} f(x)f(y)\left\{\frac1{\rho^4}(r^{2H}x-\mu
y)^2
-\frac{r^{2H}}{\rho^2}\right\}\varphi(x,y)dxdy|\\
&\leq \frac{C_H}{(s-r)^{2H}}\left(
E\left[|f(B^H_s)|^2\right]E\left[|f(B^H_r)|^2\right]\right)^{1/2}.
\end{align*}
This completes the proof.
\end{proof}

\section{Existence of the generalized quadratic covariation}~\label{sec4}

In this section, for $0<H<\frac12$ we study the existence of the
{\it generalized quadratic covariation}. Denote
$$
J_\varepsilon(f,t):=\frac{1}{\varepsilon^{2H}}\int_0^t\left\{f(B^{H}_{
s+\varepsilon}) -f(B^{H}_s)\right\}(B^{H}_{s+\varepsilon}-B^{H}_s)ds^{2H}
$$
for $\varepsilon>0$ and $t\geq 0$. Recall that the {\it generalized
quadratic covariation} $[f(B^H),B^H]^{(W)}_t$ is defined as
\begin{equation}\label{sec4-eq1.1}
[f(B^H),B^H]^{(W)}_t:=\lim_{\varepsilon\downarrow
0}J_\varepsilon(f,t),
\end{equation}
provided the limit exists uniformly in probability. Clearly, we
have (see, for example, Klein and Gin\'e~\cite{Klein})
\begin{align}
[B^H,B^H]^{(W)}_t=t^{2H}
\end{align}
for all $t\geq 0$. In fact, one can easily prove that
\begin{align*}
E&\left|\frac{1}{\varepsilon^{2H}}\int_0^t
(B^H_{s+\varepsilon}-B^H_s)^2ds-t^{2H}\right|^2\\
&\qquad= \frac{1}{\varepsilon^{4H}}\int_0^t\int_0^t E\left[
(B^{H}_{r+\varepsilon}-B^{H}_r)^2(B^{H}_{s+\varepsilon}-B^{H}_s)^2
\right]ds^{2H}dr^{2H}-t^{4H}\\
&\qquad\longrightarrow 0
\end{align*}
for $t\geq 0$, as $\varepsilon\downarrow 0$.

Consider the decomposition
\begin{equation}\label{sec4-eq4.000000}
\begin{split}
&\frac{1}{\varepsilon^{2H}}\int_0^t\left\{f(B^{H}_{
s+\varepsilon})-f(B^{H}_s)\right\}(B^{H}_{s+\varepsilon}-B^{H}_s)ds^{2H}\\
&=\frac{1}{\varepsilon^{2H}}\int_0^tf(B^{H}_{
s+\varepsilon})(B^{H}_{s+\varepsilon}-B^{H}_s)ds^{2H}-\frac{1}{\varepsilon^{2H}}
\int_0^tf(B^{H}_s)(B^{H}_{s+\varepsilon}-B^{H}_s)ds^{2H}\\
&\equiv I_\varepsilon^{+}(f,t)-I_\varepsilon^{-}(f,t),
\end{split}
\end{equation}
and define the set ${\mathscr H}=\{f\,:\,{\text { measurable
functions on ${\mathbb R}$ such that $\|f\|_{\mathscr
H}<\infty$}}\}$, where
\begin{align*}
\|f\|_{\mathscr H}:=&\sqrt{\int_0^T\int_{\mathbb
R}|f(x)|^2e^{-\frac{x^2}{2s^{2H}}}\frac{dxds}{\sqrt{2\pi}s^{1-H}}}
+\sqrt{\int_0^T\int_{\mathbb R}|f(x)|^2
e^{-\frac{x^2}{2s^{2H}}}\frac{dxds}{\sqrt{2\pi}(T-s)^{1-H}}}.
\end{align*}
Then, ${\mathscr H}$ is a Banach space and the set ${\mathscr E}$ of
elementary functions of the form
$$
f_\triangle(x)=\sum_{i}f_{i}1_{(x_{i-1},x_{i}]}(x)
$$
is dense in ${\mathscr H}$, where $\{x_i,0\leq i\leq l\}$ is an
finite sequence of real numbers such that $x_i<x_{i+1}$. Moreover,
${{\mathscr H}}$ contains the sets ${\mathscr H}_\gamma$,
$\gamma>2$, of measurable functions $f$ such that
$$
\int_{0}^T\int_{\mathbb
R}|f(x)|^{\gamma}e^{-\frac{x^2}{2s^{2H}}}\frac{dxds}{\sqrt{2\pi}s^{1-H}}
<\infty.
$$

Our main object of this section is to explain and prove the following theorem.
\begin{theorem}\label{th4.1}
Let $0<H<\frac12$ and $f\in {\mathscr H}$. Then the generalized
quadratic covariation $[f(B^H),B^H]^{(W)}$ exists and
\begin{align}\label{th4.1-eq}
E\left|[f(B^H),B^H]^{(W)}_t\right|^2\leq C_H\|f\|_{\mathscr H}^2.
\end{align}
\end{theorem}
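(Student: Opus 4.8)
The plan is to prove the $L^2$-estimate \eqref{th4.1-eq}, together with the convergence of $J_\varepsilon(f,t)$ in $L^2$, first for $f\in C^2(\mathbb R)$ with compact support, and then to reach a general $f\in\mathscr H$ by density: the set $\mathscr E$ of elementary functions is dense in $\mathscr H$, and each elementary function is in turn a limit in $\mathscr H$ of such smooth functions. For smooth $f$ of compact support I would show that $\lim_{\varepsilon,\eta\downarrow0}E[J_\varepsilon(f,t)J_\eta(f,t)]$ exists; this forces $E|J_\varepsilon(f,t)-J_\eta(f,t)|^2\to0$, so $J_\varepsilon(f,t)$ converges in $L^2$ to a limit that is $[f(B^H),B^H]^{(W)}_t$, while taking $\eta=\varepsilon$ in the same computation gives $\lim_{\varepsilon\downarrow0}E|J_\varepsilon(f,t)|^2\le C_H\|f\|_{\mathscr H}^2$. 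The uniform bound $\|J_\varepsilon(g,t)\|_{L^2}\le C_H\|g\|_{\mathscr H}$ for $g$ smooth of compact support then extends $g\mapsto J_\varepsilon(g,t)$ to a bounded map on $\mathscr H$, and a routine approximation (plus Fatou's lemma) yields both the existence of $[f(B^H),B^H]^{(W)}_t$ and \eqref{th4.1-eq} for every $f\in\mathscr H$.

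For the core computation I use the decomposition \eqref{sec4-eq4.000000}, $J_\varepsilon(f,t)=I_\varepsilon^{+}(f,t)-I_\varepsilon^{-}(f,t)$, and estimate the two pieces separately; the two summands of $\|f\|_{\mathscr H}^2$ are produced by estimating $I_\varepsilon^{-}$ and $I_\varepsilon^{+}$ respectively. For $I_\varepsilon^{-}$ and $\varepsilon,\eta>0$ I write
$$
E\big[I_\varepsilon^{-}(f,t)I_\eta^{-}(f,t)\big]=\frac{1}{\varepsilon^{2H}\eta^{2H}}\int_0^t\int_0^t E\big[f(B^H_r)f(B^H_s)(B^H_{r+\varepsilon}-B^H_r)(B^H_{s+\eta}-B^H_s)\big]\,ds^{2H}dr^{2H},
$$
split the domain of integration into $\{r<s\}$ and $\{r>s\}$ (the two pieces being symmetric), isolate the ``diagonal'' subregion in which the increment intervals $(r,r+\varepsilon)$ and $(s,s+\eta)$ overlap (whose contribution is a lower-order remainder), and on the complement apply the Gaussian integration-by-parts formula — the duality \eqref{sec2-eq2.5-1000}, or simply elementary Gaussian calculus — to transfer the two increment factors onto $f$. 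Because the product of two Gaussian variables has Wick order at most $2$, this produces a finite sum of terms of the form $(\text{products of covariances})\times E[f^{(i)}(B^H_r)f^{(j)}(B^H_s)]$ with $0\le i+j\le2$. The covariance factors are combinations of $E[B^H_a(B^H_b-B^H_c)]$ and $E[(B^H_{r+\varepsilon}-B^H_r)(B^H_{s+\eta}-B^H_s)]$, each controlled by Lemma~\ref{lem3.5} and Lemma~\ref{lem3.6}; the factors $E[f^{(i)}(B^H_r)f^{(j)}(B^H_s)]$ are controlled by the Cauchy--Schwarz inequality for $i=j=0$, by Lemma~\ref{lem3.3} for $i=j=1$, and by Lemma~\ref{lem3.4} for $\{i,j\}=\{0,2\}$, each in the form $C_H\,k(r,s)\big(E[|f(B^H_r)|^2]E[|f(B^H_s)|^2]\big)^{1/2}$ with an explicit kernel $k$.

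Collecting these estimates, dividing by $\varepsilon^{2H}\eta^{2H}$ and letting $\varepsilon,\eta\downarrow0$, the $\varepsilon,\eta$-dependent prefactors each converge to a finite limit and one is left with a bound of the shape
$$
\limsup_{\varepsilon,\eta\downarrow0}\big|E[I_\varepsilon^{-}(f,t)I_\eta^{-}(f,t)]\big|\le C_H\int_0^t\int_0^t k(r,s)\big(E[|f(B^H_r)|^2]E[|f(B^H_s)|^2]\big)^{1/2}\,ds^{2H}dr^{2H};
$$
inserting $E[|f(B^H_s)|^2]=\int_{\mathbb R}|f(x)|^2 e^{-x^2/(2s^{2H})}(\sqrt{2\pi}\,s^{H})^{-1}\,dx$, using the Cauchy--Schwarz inequality in $(r,s)$ together with the elementary bound $\int_0^t|s-r|^{-2H}\,dr\le C_{H,T}$ (valid because $2H<1$), this collapses to
$$
C_H\int_0^T\int_{\mathbb R}|f(x)|^2 e^{-x^2/(2s^{2H})}\frac{dx\,ds}{\sqrt{2\pi}\,s^{1-H}},
$$
the first summand of $\|f\|_{\mathscr H}^2$. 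Taking $\eta=\varepsilon$ shows $E|I_\varepsilon^{-}(f,t)|^2$ converges, so comparing the three limits $I_\varepsilon^{-}(f,t)$ is Cauchy in $L^2$. The term $I_\varepsilon^{+}(f,t)$, in which $f$ is evaluated at the later time $s+\varepsilon$, is treated in exactly the same way after the time reversal $\widehat B^H_u=B^H_{T-u}$ on $[0,T]$ — under which $f$ is again evaluated at the earlier of the two times and the time $T$, hence the factor $T-s$, enters the estimate — the analogues of Lemmas~\ref{lem3.3}--\ref{lem3.6} for the Gaussian process $\widehat B^H$ being obtained from those lemmas by the substitution $u\mapsto T-u$; this yields the $(T-s)^{1-H}$-weighted summand. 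Hence $E|J_\varepsilon(f,t)|^2\le 2E|I_\varepsilon^{+}(f,t)|^2+2E|I_\varepsilon^{-}(f,t)|^2\le C_H\|f\|_{\mathscr H}^2$ for all small $\varepsilon$, $J_\varepsilon(f,t)$ converges in $L^2$, and the density step completes the proof.

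The step I expect to be the main obstacle is the bookkeeping in the integration-by-parts argument: one must track the numerous covariance coefficients — separating cleanly the short-range (order $\varepsilon^{2H}$) contributions concentrated near the diagonal $r=s$ from the long-range ones, and handling the overlap region where $|r-s|$ is comparable to $\varepsilon$ — and, crucially, choose the right grouping of the resulting terms so that no group is estimated too crudely; for each surviving factor $E[f^{(i)}(B^H_r)f^{(j)}(B^H_s)]$ one must invoke precisely the estimate (Lemma~\ref{lem3.3} versus Lemma~\ref{lem3.4}) whose kernel $k(r,s)$ remains integrable against $ds^{2H}dr^{2H}$ both up to the diagonal and near $r=0$, the factor $s^{H}/r^{H}$ appearing in Lemma~\ref{lem3.3} having in particular to be absorbed by the weight coming from $ds^{2H}dr^{2H}$. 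A secondary point is to check that the $\varepsilon,\eta$-dependent prefactors truly converge to finite limits rather than merely staying bounded, which is what upgrades the $L^2$-bound to $L^2$-convergence.
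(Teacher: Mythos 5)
Your architecture coincides with the paper's in its two load-bearing components: the uniform bound $E|I^{\pm}_\varepsilon(f,t)|^2\le C_H\|f\|^2_{\mathscr H}$ for smooth compactly supported $f$, obtained from the decomposition \eqref{sec4-eq4.000000} by using the duality \eqref{sec2-eq2.5-1000} to transfer the increments onto $f$ and then invoking Lemmas~\ref{lem3.3}--\ref{lem3.6} (with the time reversal producing the $(T-s)^{1-H}$ summand from $I^{+}_\varepsilon$), is exactly the paper's Lemma~\ref{lem4.1}; and the mollifier/density step is exactly the final paragraph of the paper's proof of Theorem~\ref{th4.1}. Where you genuinely diverge is in how convergence of $J_\varepsilon(f,t)$ is established for smooth $f$: you propose the second-moment method, i.e.\ showing that $E[J_\varepsilon(f,t)J_\eta(f,t)]$ has a limit as $\varepsilon,\eta\downarrow 0$. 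The paper instead quotes Theorem~A of Gradinaru--Nourdin, which gives the almost sure convergence $\frac{1}{\varepsilon^{2H}}\int_0^t f'(B^H_s)(B^H_{s+\varepsilon}-B^H_s)^2\,ds^{2H}\to\int_0^t f'(B^H_s)\,ds^{2H}$, and uses the H\"older continuity of $B^H$ to absorb the Taylor remainder; this yields the identity \eqref{sec4-eq4.15} for $f\in C^1$ with essentially no computation, after which the Cauchy-in-probability argument for general $f\in{\mathscr H}$ needs only the uniform bound applied to $f-f_n$. What the paper's route buys is that the hardest piece of your plan never has to be carried out: in the mixed-moment computation the off-diagonal contributions of the ``quadratic variation'' term vanish at rate $(\varepsilon\eta)^{1-2H}$ and the entire limit is carried by the overlap region $|r-s|\lesssim\varepsilon\vee\eta$, so identifying the limit (as opposed to bounding the expression, which only needs absolute values) requires a genuinely finer diagonal analysis that your write-up asserts (``the prefactors converge to finite limits'') rather than proves --- you correctly flag this as the main obstacle. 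Your route is viable and self-contained, but to complete it you must either execute that diagonal analysis or, more economically, substitute the a.s.\ convergence result for $C^1$ integrands as the paper does.
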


We split the proof into several lemmas, and for simplicity
throughout this paper we let $T=1$.

\begin{lemma}\label{lem4.1}
Let $0<H<\frac12$ and let $f$ be an infinitely differentiable function with compact support. We then have
\begin{align}
&E\left|I_\varepsilon^{-}(f,t)\right|^2\leq C_H\|f\|_{\mathscr H}^2,\\
&E\left|I_\varepsilon^{+}(f,t)\right|^2\leq C_H\|f\|_{\mathscr H}^2
\end{align}
for all $0<\varepsilon\leq 1$.
\end{lemma}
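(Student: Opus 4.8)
The plan is to estimate $E|I_\varepsilon^{-}(f,t)|^2$ and $E|I_\varepsilon^{+}(f,t)|^2$ separately, each by expanding the square as a double integral over $(r,s)\in[0,t]^2$ and controlling the resulting expectations of products of Gaussian functionals. Take the first term. By definition,
\begin{align*}
E\left|I_\varepsilon^{-}(f,t)\right|^2=\frac{1}{\varepsilon^{4H}}\int_0^t\int_0^t E\left[f(B^H_r)f(B^H_s)(B^H_{r+\varepsilon}-B^H_r)(B^H_{s+\varepsilon}-B^H_s)\right]dr^{2H}ds^{2H}.
\end{align*}
By symmetry it suffices to treat the region $r\le s$. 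First I would split off the diagonal-type region $s<r+\varepsilon$ (where the increments overlap or are adjacent) from the region $s\ge r+\varepsilon$. On the off-diagonal region the increments $B^H_{s+\varepsilon}-B^H_s$ and $B^H_{r+\varepsilon}-B^H_r$ are governed by Lemma \ref{lem3.5}, whose covariance bound $C_H\varepsilon^{4H}/(s-r-\varepsilon)^{2H}$ carries the decay needed to absorb the $\varepsilon^{-4H}$ prefactor; what remains is to handle the four-point expectation $E[f(B^H_r)f(B^H_s)(\cdots)(\cdots)]$, which I would convert, via Wick/integration-by-parts on Gaussian space (or the heat-kernel representation already implicit in $\|\cdot\|_{\mathscr H}$), into a form bounded by $\bigl(E|f(B^H_r)|^2 E|f(B^H_s)|^2\bigr)^{1/2}$ times the covariance factors — this is exactly the mechanism of Lemmas \ref{lem3.3} and \ref{lem3.4}, applied after a smooth truncation using that $f$ is $C^\infty$ with compact support. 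Then $E|f(B^H_r)|^2=\int_{\mathbb R}|f(x)|^2 e^{-x^2/(2r^{2H})}\,dx/(\sqrt{2\pi}r^H)$, and integrating the product of these Gaussian densities against $dr^{2H}ds^{2H}$ with the remaining $(s-r)^{-2H}$-type weight reproduces the first summand of $\|f\|_{\mathscr H}^2$ after a Cauchy–Schwarz split in $(r,s)$; the factor $(T-s)^{-(1-H)}=(1-s)^{-(1-H)}$ enters precisely because one of the two increments can be near the right endpoint, which is where the second term of the norm originates.

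For the near-diagonal region $0\le s-r<\varepsilon$ I would bound the increments crudely by $(B^H_{r+\varepsilon}-B^H_r)(B^H_{s+\varepsilon}-B^H_s)\le \tfrac12[(B^H_{r+\varepsilon}-B^H_r)^2+(B^H_{s+\varepsilon}-B^H_s)^2]$ and use that the second moment of each increment is exactly $\varepsilon^{2H}$, so the $\varepsilon^{-4H}$ in front becomes $\varepsilon^{-2H}$ multiplied by a region of $(r,s)$-measure (in $dr^{2H}ds^{2H}$) of order $\varepsilon^{2H}$ near each point — this produces a bounded total weight and again a factor of $E|f(B^H_s)|^2$, yielding the norm. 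The term $I_\varepsilon^{+}(f,t)$ is handled identically after the shift: writing $f(B^H_{s+\varepsilon})$ and using the occupation-density formula for $B^H_{s+\varepsilon}$ gives the same Gaussian density with $s$ replaced by $s+\varepsilon\le 1+\varepsilon$; a harmless change of variables and the inequality $(s+\varepsilon)^H\le C s^H$ (or the complementary $(1-s)$-weight when $s$ is near the endpoint) reduce it to the previous case, which is precisely why the norm $\|f\|_{\mathscr H}$ carries both an $s^{1-H}$ and a $(T-s)^{1-H}$ term.

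The main obstacle I anticipate is the near-diagonal and boundary bookkeeping: controlling the four-point Gaussian expectation uniformly in $\varepsilon$ when $r,s$ and the shifts $r+\varepsilon,s+\varepsilon$ are all close together, where the covariance matrix of the underlying Gaussian vector degenerates and the $\rho^{-4}$-type factors in the integration-by-parts formula blow up. Here Lemma \ref{lem3.1} (the two-sided bound $t^{2H}s^{2H}-\mu^2\asymp s^{2H}(t-s)^{2H}$) is essential to show the degeneracy is cancelled exactly by the vanishing of $f$-moment products, and the crude $L^2$ bound on increments must be deployed before any integration by parts so that no negative power of a vanishing gap survives. Once the smooth compactly supported case is established with constant $C_H$ depending only on $H$, the passage to general $f\in\mathscr H$ in Theorem \ref{th4.1} follows by the density of $\mathscr E$ and a routine approximation argument, so it is important that the bound here involve only $\|f\|_{\mathscr H}$ and not any smoothness norm of $f$.
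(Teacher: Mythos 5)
Your overall skeleton matches the paper's: expand the square into a double integral over $(r,s)$, use Gaussian integration by parts (the duality \eqref{sec2-eq2.5-1000}) to reduce the four-point expectation to two-point expectations of $f$, $f'$, $f''$ controlled by Lemmas \ref{lem3.3} and \ref{lem3.4}, and integrate the resulting deterministic covariance kernels using Lemmas \ref{lem3.5} and \ref{lem3.6}. However, there is a genuine gap in your near-diagonal treatment. After the bound $XY\le\tfrac12(X^2+Y^2)$ with $X=B^H_{r+\varepsilon}-B^H_r$, you are left with $E\bigl[|f(B^H_r)f(B^H_s)|X^2\bigr]$, and you cannot simply replace $X^2$ by its expectation $\varepsilon^{2H}$: $X$ is not independent of $(B^H_r,B^H_s)$. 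Making this rigorous by conditioning brings in the conditional mean squared $\bigl(E[X\mid B^H_r,B^H_s]\bigr)^2=(\alpha B^H_r+\beta B^H_s)^2$, and controlling $E[|f(B^H_r)f(B^H_s)|(\alpha B^H_r+\beta B^H_s)^2]$ (with $\alpha,\beta$ coming from an almost degenerate covariance matrix when $r\approx s$) requires either fourth moments of $f$ or weighted moments $\int_{\mathbb R}|f(x)|^2x^2e^{-x^2/2s^{2H}}dx$, neither of which is bounded by $\|f\|_{\mathscr H}^2$ with a constant depending only on $H$ --- and, as you yourself note, the constant must not depend on the support or smoothness of $f$ for the density argument in Theorem \ref{th4.1} to work. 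The paper avoids this entirely: it performs the exact duality decomposition first, obtaining the terms $\Psi_\varepsilon(s,r,k)$, $k=1,2,3$, each a product of deterministic covariances and a two-point expectation; the only term carrying an increment--increment covariance ($k=3$) is handled on the strip $s-\varepsilon<r<s$ by the observation, proved in the remark following Lemma \ref{lem3.5}, that the bound $|E[(B^H_t-B^H_s)(B^H_{t'}-B^H_{s'})]|\le C_H(t-s)^{2H}(t'-s')^{2H}(t'-s)^{-2H}$ persists for overlapping increments $0<s'<s<t'<t$, so the near-diagonal contribution reduces to the finite deterministic integral $\int_{s-\varepsilon}^{s}r^{2H-1}(r+\varepsilon-s)^{-2H}dr$.

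A second, smaller inaccuracy: the second summand of $\|f\|_{\mathscr H}$, with weight $(T-s)^{-(1-H)}$, does not originate from increments near the right endpoint. It arises from the region $s<\varepsilon$ near the origin, where the paper bounds $\int_0^{\varepsilon}E[f^2(B^H_s)]\,s^{2H-1}(\varepsilon-s)^{-H}\cdots ds$ by its value at $\varepsilon=T=1$ using monotonicity in $\varepsilon$ (estimate \eqref{eq4.1000}); your proposal does not treat this region at all, and it is precisely where the prefactor $\varepsilon^{-4H}$ is most delicate.
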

\begin{proof}
We need only to obtain the first estimate. It follows from~\eqref{sec2-eq2.5-1000}
that
\begin{align*}
E&\left[f(B^{H}_{
s})f(B^{H}_{r})(B^{H}_{s+\varepsilon}-B^{H}_s)(B^{H}_{r+\varepsilon}-B^{H}_r)\right]\\
&=E\left[f(B^{H}_{
s})f(B^{H}_{
r})(B^{H}_{s+\varepsilon}-B^{H}_s)\int_r^{r+\varepsilon}dB^{H}_l\right]\\
&=E\int_{\mathbb R}M1_{[r,r+\varepsilon]}(l)MD_l^{(H)}
f(B^{H}_{s})f(B^{H}_{r})(B^{H}_{s+\varepsilon}-B^{H}_s)dl\\
&=\int_{\mathbb R}M1_{[r,r+\varepsilon]}(l)M1_{[0,s]}(l)E\left[
f'(B^H_{s})f(B^H_{r})(B^{H}_{s+\varepsilon}-B^{H}_s) \right]dl\\
&\qquad+\int_{\mathbb R}M1_{[r,r+\varepsilon]}(l)M1_{[0,r]}(l)E\left[
f(B^H_{s})f'(B^H_{r})(B^{H}_{s+\varepsilon}-B^{H}_s) \right]dl\\
&\qquad\qquad+\int_{\mathbb
R}M1_{[r,r+\varepsilon]}(l)M1_{[s,s+\varepsilon]}(l)E\left[
f(B^H_{s})f(B^H_{r})\right]dl\\
&=E\left[B^H_{s}(B^{H}_{r+\varepsilon}-B^{H}_r)\right]E\left[
f'(B^H_{s})f(B^H_{r})(B^{H}_{s+\varepsilon}-B^{H}_s) \right]\\
&\qquad +E\left[B^H_{r}(B^{H}_{r+\varepsilon}-B^{H}_r)\right]E\left[
f(B^H_{s})f'(B^H_{r})(B^{H}_{s+\varepsilon}-B^{H}_s) \right]\\
&\qquad\qquad +E\left[(B^{H}_{r+\varepsilon}-B^{H}_r)
(B^{H}_{s+\varepsilon}-B^{H}_s)\right]E\left[
f(B^H_{s})f(B^H_{r})\right]\\
&\equiv \Psi_{\varepsilon}(s,r,1)+\Psi_{\varepsilon}(s,r,2)
+\Psi_{\varepsilon}(s,r,3).
\end{align*}
In order to end the proof we claim now that
\begin{equation}\label{eq4.4}
\frac{1}{\varepsilon^{4H}}\left|\int_0^t\int_0^t\Psi_{\varepsilon}
(s,r,k)ds^{2H}dr^{2H} \right|\leq C_H\|f\|^2_{\mathscr H},\qquad
k=1,2,3,
\end{equation}
for all $\varepsilon>0$ small enough. Some elementary calculus can
show that, for all $0<\varepsilon\leq 1$
\begin{align*}
\int_{\varepsilon}^1E\left[|f(B^H_{s})|^2\right]&s^{2H-1}ds
\int_0^{s-\varepsilon} \frac{dr}{r^{1-2H}(s-\varepsilon-r)^{2H}}\\
&=\int_{\varepsilon}^1E\left[|f(B^H_{s})|^2\right] s^{2H-1}ds
\int_0^{s-\varepsilon} \frac{dr}{r^{1-2H}(s-\varepsilon-r)^{2H}}\\
&=\int_{\varepsilon}^1s^{2H-1} E\left[|f(B^H_{s})|^2
\right]ds\left(\int_0^1 \frac{dr}{x^{1-2H}(1-x)^{2H}}dx\right),\\
\int_{\varepsilon}^1E\left[|f(B^H_{s})|^2 \right]&s^{2H-1}ds
\int_{s-\varepsilon}^s\frac{dr}{r^{1-2H}(r+\varepsilon-s)^{2H}}\\
&\leq \int_{\varepsilon}^1E\left[|f(B^H_{s})|^2 \right]ds
\int_{s-\varepsilon}^s\frac{dr}{r^{2-4H}(r+\varepsilon-s)^{2H}}\\
&=\int_{\varepsilon}^1E\left[|f(B^H_{s})|^2 \right]ds
\int_1^{\frac{s}{s-\varepsilon}}
\frac{dx}{x^{2-4H}(x-1)^{2H}}\\
&\leq \int_0^1E\left[|f(B^H_{s})|^2 \right]ds\left(\int_1^{+\infty}
\frac{dx}{x^{2-4H}(x-1)^{2H}}\right),
\end{align*}
and
\begin{align}\notag
\int_0^{\varepsilon}E\left[|f(B^H_{s})|^2 \right]&s^{2H-1}ds\int_0^s
\frac{r^{2H-1}dr}{(r+\varepsilon-s)^{2H}}\\  \notag
&=\int_0^{\varepsilon}E\left[|f(B^H_{s})|^2
\right]s^{2H-1}ds\int_0^{\frac{s}{\varepsilon-s}}
\frac{x^{2H-1}dx}{(1+x)^{2H}}\\ \notag &\leq
\int_0^{\varepsilon}E\left(|f(B^H_{s})|^2
\right)ds\frac{s^{3H-1}}{(\varepsilon-s)^H}\left( \int_0^{\infty}
\frac{x^{H-1}dx}{(1+x)^{2H}}\right)\\   \label{eq4.1000}&\leq {C_H}
\int_0^1\int_{\mathbb
R}|f(x)|^2e^{-\frac{x^2}{2s^{2H}}}\frac{s^{2H-1}dxds}{
{\sqrt{2\pi}}(1-s)^H},
\end{align}
where the estimate~\eqref{eq4.1000} follows from the monotonicity of
the function
$$
\varepsilon\mapsto
\int_0^{\varepsilon}\frac{s^{2H-1}}{(\varepsilon-s)^H}
e^{-\frac{x^2}{2s^{2H}}}ds
$$
with $\varepsilon\in [0,1]$. It follows that
\begin{align*}
\frac{1}{\varepsilon^{4H}}&\left|\int_0^1\int_0^1
\Psi_{\varepsilon}(s,r,3) ds^{2H}dr^{2H}\right|\\
&\leq \frac{H}{\varepsilon^{4H}}\int_0^1\int_0^1
|E\left[(B^{H}_{r+\varepsilon}-B^{H}_r)
(B^{H}_{s+\varepsilon}-B^{H}_s)\right]|\\
&\qquad\qquad\qquad\cdot\left\{E\left[
f^2(B^H_{s})\right]+E\left[f^2(B^H_{r})\right]\right\}(sr)^{2H-1}dsdr\\
&= \frac{H}{\varepsilon^{4H}}\int_0^1\int_0^1
|E\left[(B^{H}_{r+\varepsilon}-B^{H}_r)
(B^{H}_{s+\varepsilon}-B^{H}_s)\right]|E\left[
f^2(B^H_{s})\right](sr)^{2H-1}dsdr\\
&\leq H\int_{\varepsilon}^1E\left[|f(B^H_{s})|^2\right]s^{2H-1}ds
\int_0^{s-\varepsilon} \frac{dr}{r^{1-2H}(s-\varepsilon-r)^{2H}}\\
&\qquad\qquad+H\int_{\varepsilon}^1E\left[|f(B^H_{s})|^2
\right]s^{2H-1}ds
\int_{s-\varepsilon}^s\frac{dr}{r^{1-2H}(r+\varepsilon-s)^{2H}}\\
&\qquad\qquad+H\int_0^{\varepsilon}E\left[|f(B^H_{s})|^2
\right]s^{2H-1}ds\int_0^s
\frac{r^{2H-1}dr}{(r+\varepsilon-s)^{2H}}\\
&\leq C_H\|f\|_{\mathscr H}^2
\end{align*}
for all $0<\varepsilon\leq 1$.

Now, let us obtain the estimate~\eqref{eq4.4} for $k=1$.
By~\eqref{sec2-eq2.5-1000} we see that
\begin{align*}
\Psi_{\varepsilon}(s,r,1)&=E\left[B^H_{s}(B^{H}_{r+\varepsilon}-B^{H}_r)\right]E\left[
f'(B^H_{s})f(B^H_{r})(B^{H}_{s+\varepsilon}-B^{H}_s) \right]\\
&=E\left[B^H_{s}(B^{H}_{r+\varepsilon}-B^{H}_r)\right]
E\left[B^H_{s}(B^{H}_{s+\varepsilon}-B^{H}_s)\right]
E\left[f''(B^H_{s})f(B^H_{r})\right]\\
&\quad+E\left[B^H_{s}(B^{H}_{r+\varepsilon}-B^{H}_r)\right]
E\left[B^H_{r}(B^{H}_{s+\varepsilon}-B^{H}_s)\right]
E\left[f'(B^H_{s})f'(B^H_{r})\right]\\
&\equiv\Psi_{\varepsilon}(s,r,1,1)+\Psi_{\varepsilon}(s,r,1,2).
\end{align*}
Together Lemma~\ref{lem3.3}, Lemma~\ref{lem3.4}, Lemma~\ref{lem3.6}
and the fact
\begin{align}\label{eq4.5}
E\left[f^2(B^H_{r})\right]&=\int_{\mathbb R}f^2(x)\frac1{\sqrt{2\pi}
r^H}e^{-\frac{x^2}{2r^{2H}}}dx\\   \tag*{} &\leq
\frac{s^H}{r^H}\int_{\mathbb R}f^2(x)\frac1{\sqrt{2\pi}
s^H}e^{-\frac{x^2}{2s^{2H}}}dx=\frac{s^H}{r^H}E\left[f^2(B^H_{s})
\right]
\end{align}
with $s\geq r>0$ lead to
\begin{align*}
\frac{1}{\varepsilon^{4H}}&\left|\int_0^t\int_0^t\Psi_{\varepsilon}(s,r,1,1) ds^{2H}dr^{2H}\right|\leq \int_0^t\int_0^t
\left|E\left[f''(B^H_{s})f(B^H_{r})\right]\right|ds^{2H}dr^{2H}\\
&\leq C_H\int_0^t\int_0^s
\frac{1}{(s-r)^{2H}}E|f(B^H_{s})f(B^H_{r})|ds^{2H}dr^{2H}\\
&\leq C_H\int_0^tE[f^2(B^H_{s})]ds^{2H}\int_0^s
\frac{s^{H/2}}{(s-r)^{2H}r^{H/2}}dr^{2H}\\
&\leq C_H\|f\|_{\mathscr H}^2,
\end{align*}
and
\begin{align*}
\frac{1}{\varepsilon^{4H}}&\left|\int_0^t\int_0^t\Psi_{\varepsilon}(s,r,1,2) ds^{2H}dr^{2H}\right|\leq \int_0^t\int_0^t
\left|E\left[f'(B^H_{s})f'(B^H_{r})\right]\right|ds^{2H}dr^{2H}\\
&\leq C_H\int_0^t\int_0^s\frac{s^{H}}{r^H(s-r)^{2H}}|E\left[f(B^H_{s})
f(B^H_{r})|\right]ds^{2H}dr^{2H}\\
&\leq C_H\|f\|_{\mathscr H}^2
\end{align*}
for all $\varepsilon>0$ and $t\geq 0$. Thus, we get
\begin{align*}
\frac{1}{\varepsilon^{4H}}&\left|\int_0^t\int_0^t\Psi_{\varepsilon}(s,r,1) ds^{2H}dr^{2H}\right|\leq
C_H\|f\|_{\mathscr H}^2.
\end{align*}
Similarly, we can also obtain the estimate~\eqref{eq4.4} for $k=2$,
and the lemma follows.
\end{proof}

Recently, Gradinaru-Nourdin~\cite{Grad3} introduced the following
perfect result:
\begin{exampless}[Theorem 2.1 in
Gradinaru--Nourdin~\cite{Grad3}] Assume that $H\in(0,1)$. Let
$f:{\mathbb R}\to {\mathbb R}$ be a function satisfying
\begin{equation}\label{eq4.2-Gradinaru--Nourdin}
|f(x)-f(y)|\leq C|x-y|^a(1+x^2+y^2)^b,\quad (C>0,0<a\leq 1,b>0),
\end{equation}
for all $x,y\in {\mathbb R}$, and let $\{Y_t:\;t\geq 0\}$ be a
continuous stochastic process. Then, as $\varepsilon\to 0$,
\begin{equation}
\int_0^tY_sf(\frac{B^H_{s+\varepsilon}-B^H_s}{\varepsilon^H})ds
\longrightarrow E[f(N)]\int_0^tY_sds,
\end{equation}
almost surely, uniformly in $t$ on each compact interval, where $N$
is a standard Gaussian random variable.
\end{exampless}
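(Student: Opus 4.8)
The plan is to prove the $Y\equiv1$ case first and then transfer to a general continuous $Y$ by approximation. Write $g_\varepsilon(s)=f\big((B^H_{s+\varepsilon}-B^H_s)/\varepsilon^H\big)$ and $F_\varepsilon(t)=\int_0^t g_\varepsilon(s)\,ds$. Condition~\eqref{eq4.2-Gradinaru--Nourdin} forces $f$ to be continuous with at most polynomial growth, so $f(N)\in L^p$ for every $p$ and $E[f(N)]$ is finite; the target is $F_\varepsilon(t)\to E[f(N)]\,t$ almost surely, uniformly in $t$ on compacts. For each fixed $s$ the variable $(B^H_{s+\varepsilon}-B^H_s)/\varepsilon^H$ is standard Gaussian, so $E[g_\varepsilon(s)]=E[f(N)]$ exactly, and the whole assertion is a law-of-large-numbers/ergodic averaging phenomenon.

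The quantitative core is an $L^2$ estimate. For $s\ne r$ the pair of normalized increments is a centred bivariate Gaussian with unit marginals and correlation $\rho_\varepsilon(s,r)$; by self-similarity and stationarity of increments (a distributional identity, hence legitimate inside an expectation), $\rho_\varepsilon(s,r)$ equals the fractional-Gaussian-noise correlation at lag $|s-r|/\varepsilon$. Expanding $f$ in Hermite polynomials and using Mehler's formula, I would write $\mathrm{Cov}(g_\varepsilon(s),g_\varepsilon(r))=\sum_{q\ge1}a_q^2\,q!\,\rho_\varepsilon(s,r)^q$, so that
\[
E\big|F_\varepsilon(t)-E[f(N)]\,t\big|^2=\int_0^t\!\!\int_0^t \mathrm{Cov}(g_\varepsilon(s),g_\varepsilon(r))\,ds\,dr .
\]
After the change of variables $s=\varepsilon u,\ r=\varepsilon v$ this becomes $\varepsilon^2$ times a double integral of the fGn correlation over $[0,t/\varepsilon]^2$; since for $0<H<\tfrac12$ the lag-$k$ correlation behaves like $|k|^{2H-2}$ and is summable, the double integral is $O(t/\varepsilon)$ and the whole quantity is $O(\varepsilon t)\to0$. (The same computation gives a rate $O((\varepsilon/t)^{2-2H})$ for every $H\in(0,1)$, so the argument is not restricted to $H<\tfrac12$.) The growth hypothesis~\eqref{eq4.2-Gradinaru--Nourdin} is exactly what guarantees finiteness of the Hermite coefficients and convergence of these Gaussian integrals.

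From the $L^2$ bound I would pass to almost-sure convergence along a subsequence $\varepsilon_n\downarrow0$ by Borel--Cantelli (invoking, if needed, $L^p$-moment refinements of the above estimate to make the rate summable), and then obtain uniformity in $t$: applying the result to $|f|$, which also satisfies~\eqref{eq4.2-Gradinaru--Nourdin}, the increasing functions $t\mapsto\int_0^t|g_{\varepsilon_n}(s)|\,ds$ converge pointwise to the continuous limit $E[|f(N)|]\,t$, hence uniformly by P\'olya's monotone-convergence theorem; this delivers equicontinuity of the family $t\mapsto F_{\varepsilon_n}(t)$, so pointwise convergence on a countable dense set of $t$ together with continuity of $t\mapsto E[f(N)]t$ upgrades to uniform convergence on compacts. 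Finally, to fill the continuum $\varepsilon\to0$, I would control $\sup_{t\le T}|F_\varepsilon(t)-F_{\varepsilon_n}(t)|$ for $\varepsilon\in(\varepsilon_{n+1},\varepsilon_n]$ by combining~\eqref{eq4.2-Gradinaru--Nourdin} with the modulus of continuity of $B^H$, which makes the normalized increments at scales $\varepsilon$ and $\varepsilon_n$ uniformly close once the ratio $\varepsilon_n/\varepsilon_{n+1}$ is kept bounded.

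Once the case $Y\equiv1$ is established (simultaneously for $f$ and $|f|$, and on every subinterval), a general continuous $Y$ follows by approximating the continuous path $s\mapsto Y_s(\omega)$ on $[0,t]$ uniformly by step functions $Y^{(m)}$: for each $Y^{(m)}$ the integral splits into finitely many pieces governed by the $Y\equiv1$ result, while the replacement error is bounded by $\|Y-Y^{(m)}\|_\infty\int_0^t|g_\varepsilon(s)|\,ds$, whose integral factor stays bounded by the $|f|$ version just proved. The main obstacle is the last step of the $Y\equiv1$ argument, namely passing from the subsequence to the full continuum in $\varepsilon$ while retaining uniformity in $t$: the moment estimates deliver convergence only along sequences, and the roughness of $B^H$ for small $H$ makes the modulus-of-continuity sandwiching delicate. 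This is precisely where the Hölder-type growth condition~\eqref{eq4.2-Gradinaru--Nourdin} does its essential work.
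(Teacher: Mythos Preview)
The paper does not give its own proof of Theorem~A: it is quoted verbatim as ``Theorem 2.1 in Gradinaru--Nourdin~\cite{Grad3}'' and used as an input, with no argument supplied. So there is nothing in the present paper to compare your proposal against; the authors simply invoke the result and move on to Lemma~\ref{lem4.4}.

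As a stand-alone sketch your outline is reasonable and follows the standard route for ergodic-type limits of Gaussian functionals: Hermite expansion plus Mehler to get an $L^2$ rate, Borel--Cantelli along a geometric subsequence, a monotonicity/P\'olya trick (applied to $|f|$, which indeed inherits~\eqref{eq4.2-Gradinaru--Nourdin}) for uniformity in $t$, and then a modulus-of-continuity sandwich to pass from the subsequence to the full continuum in $\varepsilon$. Two small remarks: your stated rate $O((\varepsilon/t)^{2-2H})$ is the one that comes out for $H>\tfrac12$; for $H<\tfrac12$ the fGn correlation is integrable and the variance is actually $O(\varepsilon t)$, which is even better, so your Borel--Cantelli step is fine in either regime. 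The genuinely delicate point, as you flag, is the interpolation in $\varepsilon$ between subsequence points; here one must compare $(B^H_{s+\varepsilon}-B^H_s)/\varepsilon^H$ with $(B^H_{s+\varepsilon_n}-B^H_s)/\varepsilon_n^H$, and both the increment difference and the change of normalizing factor have to be controlled simultaneously, uniformly in $s$. With a geometric subsequence (bounded ratio $\varepsilon_n/\varepsilon_{n+1}$) and the a.s.\ uniform modulus of continuity of $B^H$ on compacts this goes through, and the H\"older exponent $a$ in~\eqref{eq4.2-Gradinaru--Nourdin} is exactly what converts closeness of the arguments into closeness of $g_\varepsilon(s)$. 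Since the paper itself offers no proof, no further comparison is possible here.
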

According to the theorem above we get the next lemma.
\begin{lemma}\label{lem4.4} Let $0<H<1$ and $f\in
C({\mathbb R})$. We then have
\begin{align}\label{sec4-eq4.11-0}
\lim_{\varepsilon\downarrow 0}\frac{1}{\varepsilon^{2H}}
\int_0^tf(B^{H}_s)(B^{H}_{s+\varepsilon}-B^{H}_s)^2ds^{2H}
=\int_0^tf(B^H_s)ds^{2H}
\end{align}
almost surely, for all $t\geq 0$.
\end{lemma}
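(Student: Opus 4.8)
The plan is to deduce Lemma~\ref{lem4.4} from Theorem~A (Theorem~2.1 of Gradinaru--Nourdin~\cite{Grad3}), applied with the test function $g(x)=x^2$. Since the statement is pathwise, I would first localize on the events $\{\sup_{0\le s\le T}|B^H_s|\le R\}$, $R\uparrow\infty$, so as to reduce to the case of a bounded (indeed compactly supported) $f$. For $g(x)=x^2$ the hypotheses of Theorem~A hold: from $|x^2-y^2|=|x-y|\,|x+y|\le 2|x-y|(1+x^2+y^2)^{1/2}$ the growth bound~\eqref{eq4.2-Gradinaru--Nourdin} holds with $a=1$, $b=1/2$, and $E[g(N)]=E[N^2]=1$. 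Writing $ds^{2H}=2Hs^{2H-1}ds$, the quantity in~\eqref{sec4-eq4.11-0} becomes
$$
\frac{1}{\varepsilon^{2H}}\int_0^tf(B^{H}_s)(B^{H}_{s+\varepsilon}-B^{H}_s)^2ds^{2H}
=\int_0^tY_s\,g\!\left(\frac{B^{H}_{s+\varepsilon}-B^{H}_s}{\varepsilon^{H}}\right)ds,\qquad Y_s:=2Hs^{2H-1}f(B^H_s),
$$
with conjectural limit $\int_0^tY_s\,ds=\int_0^tf(B^H_s)\,ds^{2H}$, which is exactly the right-hand side of~\eqref{sec4-eq4.11-0}. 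When $\tfrac12\le H<1$ the weight $s^{2H-1}$ is bounded on $[0,T]$, so $Y$ is a continuous process on $[0,\infty)$ and Theorem~A applies verbatim, giving~\eqref{sec4-eq4.11-0} (in fact uniformly in $t$ on compacts).

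When $0<H<\tfrac12$ the only new feature is that $Y_s\sim 2Hf(0)s^{2H-1}$ blows up as $s\downarrow0$, while remaining pathwise integrable there, so one must localize near the origin. For fixed $\delta\in(0,t)$, applying Theorem~A to the \emph{continuous} process $s\mapsto 2H(s\vee\delta)^{2H-1}f(B^H_s)$, and disposing of its contribution over $[0,\delta]$ (which, by Theorem~A applied to $s\mapsto f(B^H_s)$, tends to $2H\delta^{2H-1}\int_0^\delta f(B^H_s)\,ds$), yields
$$
\frac{1}{\varepsilon^{2H}}\int_\delta^tf(B^{H}_s)(B^{H}_{s+\varepsilon}-B^{H}_s)^2ds^{2H}\;\xrightarrow[\varepsilon\downarrow0]{}\;\int_\delta^tf(B^H_s)\,ds^{2H}
$$
almost surely, uniformly in $t$ on compacts, for each fixed $\delta$. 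Since $\bigl|\int_0^\delta f(B^H_s)\,ds^{2H}\bigr|\le\delta^{2H}\|f\|_\infty\to0$ as $\delta\downarrow0$, the lemma reduces to the claim that
$$
\limsup_{\varepsilon\downarrow0}\frac{1}{\varepsilon^{2H}}\left|\int_0^\delta f(B^{H}_s)(B^{H}_{s+\varepsilon}-B^{H}_s)^2ds^{2H}\right|\;\xrightarrow[\delta\downarrow0]{}\;0\qquad\text{a.s.},
$$
for then, for a.e.\ $\omega$ and any $\beta>0$, choosing a rational $\delta$ with both near-origin bounds below $\beta$ and combining with the displayed limit on $[\delta,t]$ gives $\limsup_{\varepsilon\downarrow0}\bigl|\frac{1}{\varepsilon^{2H}}\int_0^tf(B^H_s)(B^H_{s+\varepsilon}-B^H_s)^2ds^{2H}-\int_0^tf(B^H_s)\,ds^{2H}\bigr|\le 2\beta$, whence~\eqref{sec4-eq4.11-0}.

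The main obstacle is this last $\limsup$ estimate, where the cancellation among the increments near $0$ has to be genuinely exploited: bounding $(B^H_{s+\varepsilon}-B^H_s)^2$ by the squared modulus of continuity of $B^H$, of order $\varepsilon^{2H}\log(1/\varepsilon)$, against $\int_0^\delta s^{2H-1}ds=\delta^{2H}/(2H)$ produces a spurious factor $\log(1/\varepsilon)$, while an integration by parts in $s$ merely trades the integrable weight $s^{2H-1}$ for the non-integrable $s^{2H-2}$. I would instead argue by moments. With $f$ bounded one has, uniformly in $\varepsilon$,
$$
E\!\left[\frac{1}{\varepsilon^{2H}}\left|\int_0^\delta f(B^{H}_s)(B^{H}_{s+\varepsilon}-B^{H}_s)^2ds^{2H}\right|\right]\le\|f\|_\infty\frac{1}{\varepsilon^{2H}}\int_0^\delta E\bigl[(B^{H}_{s+\varepsilon}-B^{H}_s)^2\bigr]ds^{2H}=\|f\|_\infty\,\delta^{2H},
$$
and a second-moment computation of the same type as those in Lemma~\ref{lem4.1} and in the verification of $[B^H,B^H]^{(W)}_t=t^{2H}$ shows that, for each fixed $\delta$, the $[0,\delta]$-piece converges in $L^2$ to $\int_0^\delta f(B^H_s)\,ds^{2H}$; fed back through the identity above (together with the a.s.\ convergence already available on $[\delta,t]$), this delivers the required bound. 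Alternatively, the self-similarity $\{B^H_{\delta u}\}_{u\ge0}\stackrel{d}{=}\{\delta^{H}B^H_{u}\}_{u\ge0}$ rescales $[0,\delta]$ onto $[0,1]$ and transfers the behaviour there. All remaining steps are routine.
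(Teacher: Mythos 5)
Your main line is exactly the paper's: the paper ``proves'' Lemma~\ref{lem4.4} by the single sentence ``According to the theorem above we get the next lemma,'' i.e.\ by invoking Theorem~A with $g(x)=x^2$, $E[N^2]=1$ and $Y_s=2Hs^{2H-1}f(B^H_s)$, just as you do. Your verification of the growth hypothesis for $g(x)=x^2$ and your initial localization to bounded $f$ are fine, and you have in fact spotted something the paper glosses over entirely: for $0<H<\frac12$ the process $Y_s=2Hs^{2H-1}f(B^H_s)$ is \emph{not} continuous at $s=0$, so Theorem~A does not apply verbatim and the contribution of $[0,\delta]$ must be handled separately. Your treatment of $[\delta,t]$ via the truncated weight $(s\vee\delta)^{2H-1}$ is correct.

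The gap is in your disposal of the $[0,\delta]$ piece $A_\varepsilon(\delta):=\varepsilon^{-2H}\int_0^\delta f(B^H_s)(B^H_{s+\varepsilon}-B^H_s)^2\,ds^{2H}$. What you need is an almost sure bound of the form $\limsup_{\varepsilon\downarrow0}|A_\varepsilon(\delta)|\le h(\delta)$ with $h(\delta)\to0$, and neither of your two moment estimates delivers it: the bound $E|A_\varepsilon(\delta)|\le\|f\|_\infty\delta^{2H}$, being uniform in $\varepsilon$ but taken one $\varepsilon$ at a time, controls no pathwise $\limsup$ over the continuum $\varepsilon\downarrow0$ (Fatou goes the wrong way), and the $L^2$ convergence of $A_\varepsilon(\delta)$ for fixed $\delta$ only upgrades the conclusion to convergence \emph{in probability} of the full expression, i.e.\ a.s.\ convergence along subsequences, not the a.s.\ statement of \eqref{sec4-eq4.11-0}. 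As you yourself observe, the crude pathwise bound via the modulus of continuity leaves a $\log(1/\varepsilon)$, so some genuinely new input is needed here --- e.g.\ a maximal estimate for $\sup_{0<\varepsilon\le\varepsilon_0}|A_\varepsilon(\delta)|$, or a Borel--Cantelli argument along a sequence $\varepsilon_n$ together with an interpolation in $\varepsilon$. To be fair, the paper offers nothing at all on this point, so your proposal is more honest than the source; but as written it establishes the ucp version of the lemma rather than the claimed almost sure convergence.
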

As a direct consequence of Lemma~\ref{lem4.4}, for $f\in
C^1({\mathbb R})$ we have
\begin{equation}\label{sec4-eq4.15}
\left[f(B^H),B^H\right]^{(W)}_t=2H\int_0^tf'(B^H_s)s^{2H-1}ds
\end{equation}
for all $0<H<1$. In fact, the H\"older continuity of fBm $B^H$
yields
\begin{align*}
\lim_{\varepsilon\downarrow 0}\frac{1}{\varepsilon^{2H}}
\int_0^to(B^{H}_{s+\varepsilon}-B^{H}_s)(B^{H}_{s+\varepsilon}
-B^{H}_s)^2ds^{2H}=0
\end{align*}
almost surely. It follows that
\begin{align*}
\lim_{\varepsilon\downarrow 0}&\frac{1}{\varepsilon^{2H}}\int_0^t
\left\{f(B^{H}_{
s+\varepsilon})-f(B^{H}_s)\right\}(B^{H}_{s+\varepsilon}-B^{H}_s)
ds^{2H}\\
&=\lim_{\varepsilon\downarrow 0}\frac{1}{\varepsilon^{2H}}
\int_0^tf'(B^{H}_s)(B^{H}_{s+\varepsilon}-B^{H}_s)^2ds^{2H} =\int_0^tf'(B^H_s)ds^{2H}
\end{align*}
almost surely.

Now we can show our main result.
\begin{proof}[Proof of Theorem~\ref{th4.1}]
Given $f\in {\mathscr H}$. If $f\in C^1({\mathbb R})$, then the
theorem follows from the identity~\eqref{sec4-eq4.15} and the
follows estimate:
\begin{align*}
E\left(\int_0^tf'(B^H_s)s^{2H-1}ds\right)^2
&=\int_0^t\int_0^tE\left[f'(B^H_s) f'(B^H_r)\right](sr)^{2H-1}dsdr\\
&\leq C_H\int_0^t\int_0^s\frac{s^{\frac{7H}2-1}}{r^{1-\frac{H}2}
(s-r)^{2H}}E\left[f^2(B^H_s)\right]dsdr\\
&\leq C_H\int_0^t s^{2H-1} E\left[f^2(B^H_s)\right]ds\leq
C_H\|f\|^2_{\mathscr H}
\end{align*}
by Lemma~\ref{lem3.3} and~\eqref{eq4.5}. Let now $f\not\in
C^{\infty}_0({\mathbb R})$.

Consider the function $\zeta$ on ${\mathbb R}$ by
\begin{equation}
\zeta(x):=
\begin{cases}
ce^{\frac1{(x-1)^2-1}}, &{\text { $x\in (0,2)$}},\\
0, &{\text { otherwise}},
\end{cases}
\end{equation}
where $c$ is a normalizing constant such that $\int_{\mathbb
R}\zeta(x)dx=1$. Define the so-called mollifiers
\begin{equation}\label{sec4-eq00-4}
\zeta_n(x):=n\zeta(nx),\qquad n=1,2,\ldots
\end{equation}
and the sequence of smooth functions
\begin{align}\label{sec4-eq00-1}
f_n(x)&=\int_{\mathbb R}f(x-y)\zeta_n(y)dy=
\int_0^2f(x-\frac{y}n)\zeta(y)dy,\qquad n=1,2,\ldots
\end{align}
for all $x\in \mathbb R$. Then $\{f_n\}\subset C^{\infty}({\mathbb
R})\cap {\mathscr H}$ and $f_n$ converges to $f$ in ${\mathscr H}$,
as $n$ tends to infinity.

On the other hand, by Lemma~\ref{lem4.1} we have
\begin{align*}
P(|J_{\varepsilon_1}(f,t)-J_{\varepsilon_2}(f,t)|\geq \delta)&\leq
P\left(|J_{\varepsilon_1}(f-f_n,t)|\geq
\frac{\delta}3\right)+P\left(|J_{\varepsilon_2}(f-f_n,t)|\geq
\frac{\delta}3\right)\\
&\qquad+P\left(|J_{\varepsilon_1}(f_n,t)-J_{\varepsilon_2}(f_n,t)|\geq
\frac{\delta}3\right)\\
&\leq \frac{C_{H}}{\delta^2}\|f-f_n\|_{\mathscr
H}^2+P\left(|J_{\varepsilon_1}(f_n,t)-J_{\varepsilon_2}(f_n,t)|\geq
\frac{\delta}3\right)
\end{align*}
for all $n$ and $\delta,\varepsilon_1,\varepsilon_2>0$. Combining
this with
$$
\lim_{\varepsilon\downarrow
0}J_{\varepsilon}(f_n,t)=[f_n(B^H),B^H]^{(W)}_t=
2H\int_0^tf'_n(B^H_s)s^{2H-1}ds,\qquad n\geq 1
$$
in probability, we show that the generalized quadratic covariation
$[f(B^H),B^H]^{(W)}$ exists for $f\in {\mathscr H}$. Thus, the
estimate~\eqref{th4.1-eq} follows from Lemma~\ref{lem4.1}. This
completes the proof.
\end{proof}
\begin{corollary}\label{lem5.2}
Let $f,f_1,f_2,\ldots\in {\mathscr H}$. If $f_n\to f$ in ${\mathscr
H}$, as $n$ tends to infinity, then we have
$$
[f_n(B^H),B^H]^{(W)}_t\longrightarrow [f(B^H),B^H]^{(W)}_t
$$
in $L^2$ as $n\to \infty$.
\end{corollary}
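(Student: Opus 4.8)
The plan is to deduce the $L^2$-convergence directly from the quantitative estimate \eqref{th4.1-eq} established in Theorem \ref{th4.1}, using the linearity of the generalized quadratic covariation in its first argument. First I would observe that the map $f\mapsto [f(B^H),B^H]^{(W)}_t$ is linear on $\mathscr H$: indeed, for smooth compactly supported $g,h$ the approximating Riemann-type sums $J_\varepsilon(\cdot,t)$ are linear in the integrand, and linearity is preserved under the limit in probability, hence also under the $L^2$-limit produced in the proof of Theorem \ref{th4.1}. In particular, for $f_n,f\in\mathscr H$,
\begin{equation*}
[f_n(B^H),B^H]^{(W)}_t-[f(B^H),B^H]^{(W)}_t=[(f_n-f)(B^H),B^H]^{(W)}_t
\end{equation*}
as elements of $L^2(\Omega)$, since $f_n-f\in\mathscr H$ and Theorem \ref{th4.1} guarantees the right-hand side is a well-defined $L^2$ random variable.

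Next I would apply the inequality \eqref{th4.1-eq} to the function $f_n-f\in\mathscr H$, obtaining
\begin{equation*}
E\left|[f_n(B^H),B^H]^{(W)}_t-[f(B^H),B^H]^{(W)}_t\right|^2
=E\left|[(f_n-f)(B^H),B^H]^{(W)}_t\right|^2
\leq C_H\|f_n-f\|_{\mathscr H}^2.
\end{equation*}
Since $f_n\to f$ in $\mathscr H$ by hypothesis, the right-hand side tends to $0$ as $n\to\infty$, which is exactly the claimed $L^2$-convergence. The argument is uniform in $t\in[0,T]$ because the constant $C_H$ in \eqref{th4.1-eq} does not depend on $t$.

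The only genuinely delicate point is the justification of linearity at the level of the $L^2$-limit rather than merely the limit in probability. I would handle it by recalling the construction in the proof of Theorem \ref{th4.1}: for $f\in\mathscr H$ one picks smooth $f^{(k)}\to f$ in $\mathscr H$, and $[f(B^H),B^H]^{(W)}_t$ is the $L^2$-limit of $[f^{(k)}(B^H),B^H]^{(W)}_t=2H\int_0^t (f^{(k)})'(B^H_s)s^{2H-1}ds$. For smooth integrands this last expression is manifestly linear in $f^{(k)}$, and taking $L^2$-limits along mollifications of $f_n$, $f$ and $f_n-f$ simultaneously (using that mollification commutes with subtraction and that each mollified sequence converges in $\mathscr H$) yields the displayed identity in $L^2$. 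This step is routine but is the one requiring care; everything else is an immediate consequence of Theorem \ref{th4.1}.
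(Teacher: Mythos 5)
Your proposal is correct and follows essentially the same route as the paper: the paper's proof is precisely the one-line application of the estimate \eqref{th4.1-eq} to $f_n-f$, with linearity of $f\mapsto[f(B^H),B^H]^{(W)}_t$ taken for granted. Your additional justification of linearity at the level of the $L^2$-limit (via mollification and the identity for smooth integrands) is a careful filling-in of a step the paper leaves implicit, not a different argument.
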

\begin{proof}
The convergence follows from
$$
E\left|[f_n(B^H),B^H]^{(W)}_t-[f(B^H),B^H]^{(W)}_t\right|^2\leq
C_H\|f_n-f\|_{\mathscr H}^2\to 0,
$$
as $n$ tends to infinity.
\end{proof}
By using the above result, we immediately get an extension of
It\^{o} formula stated as follows.
\begin{theorem}\label{th4.2}
Let $0<H<\frac12$ and let $f\in {\mathscr H}$ be left continuous. If
$F$ is an absolutely continuous function with the derivative $F'=f$,
then the following It\^o type formula holds:
\begin{equation}\label{sec5-eq5.3}
F(B^H)=F(0)+\int_0^tf(B^H_s)dB^H_s+\frac12\left[f(B^H),B^H
\right]^{(W)}_t.
\end{equation}
\end{theorem}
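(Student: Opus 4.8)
The plan is to verify~\eqref{sec5-eq5.3} first for smooth $F$ by a direct application of the It\^o type formula~\eqref{eq2.1}, and then to reach the general absolutely continuous case by mollification, relying on the $L^2$-continuity of $f\mapsto[f(B^H),B^H]^{(W)}_t$ from Corollary~\ref{lem5.2} and on the closedness of the WIS integral in $L^2$ recalled in Section~\ref{sec2}.

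First I would treat $F\in C^2({\mathbb R})$, so that $f=F'\in C^1({\mathbb R})$. Viewing $F$ as a time-independent element of $C^{2,1}({\mathbb R}\times[0,\infty))$ and applying~\eqref{eq2.1} yields
\begin{equation*}
F(B^H_t)=F(0)+\int_0^tf(B^H_s)dB^H_s+H\int_0^tf'(B^H_s)s^{2H-1}ds,
\end{equation*}
and since~\eqref{sec4-eq4.15} gives $[f(B^H),B^H]^{(W)}_t=2H\int_0^tf'(B^H_s)s^{2H-1}ds$, the identity~\eqref{sec5-eq5.3} follows for such $F$.

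Next, let $F$ be absolutely continuous with $F'=f\in{\mathscr H}$ left continuous, and set $F_n=F\ast\zeta_n$ with the mollifiers $\zeta_n$ of~\eqref{sec4-eq00-4}. Then $F_n\in C^\infty({\mathbb R})$, $F_n'=f\ast\zeta_n=f_n$, and $\{f_n\}\subset C^\infty({\mathbb R})\cap{\mathscr H}$ with $f_n\to f$ in ${\mathscr H}$, exactly as in the proof of Theorem~\ref{th4.1}; left continuity together with the one-sided support of $\zeta$ also gives $f_n\to f$ pointwise and fixes the version of $f$. The smooth case gives, for every $n\ge1$,
\begin{equation*}
F_n(B^H_t)=F_n(0)+\int_0^tf_n(B^H_s)dB^H_s+\frac12[f_n(B^H),B^H]^{(W)}_t.
\end{equation*}
As $n\to\infty$, $F_n\to F$ locally uniformly (continuity of $F$ and $\int_{\mathbb R}\zeta(x)\,dx=1$), so $F_n(B^H_t)\to F(B^H_t)$ and $F_n(0)\to F(0)$ almost surely, while $[f_n(B^H),B^H]^{(W)}_t\to[f(B^H),B^H]^{(W)}_t$ in $L^2$ by Corollary~\ref{lem5.2}.

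It then remains to pass to the limit in the stochastic integral term, and this is the main obstacle: one must show that $\int_0^tf_n(B^H_s)dB^H_s$ converges in $L^2$ and that $f(B^H_\cdot)1_{[0,t]}$ is WIS-integrable with integral equal to that limit. I would do this by establishing, much as in Lemma~\ref{lem4.1} and using the duality~\eqref{sec2-eq2.5-1000} together with the estimates of Section~\ref{sec3} (notably Lemmas~\ref{lem3.3}, \ref{lem3.4} and~\ref{lem3.6}), a uniform $L^2$ bound
\begin{equation*}
E\left|\int_0^tg(B^H_s)dB^H_s\right|^2\le C_H\|g\|_{\mathscr H}^2.
\end{equation*}
Applying this with $g=f_n-f_m$ shows that $\{f_n(B^H_\cdot)1_{[0,t]}\}$ and $\{\int_0^tf_n(B^H_s)dB^H_s\}$ are Cauchy in $L^2$, and the closedness of the WIS integral recalled in Section~\ref{sec2} then identifies the limit as $\int_0^tf(B^H_s)dB^H_s$. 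Passing to the limit in the formula for $F_n$ gives~\eqref{sec5-eq5.3}. I expect the verification of this $L^2$ bound for $\int_0^tg(B^H_s)dB^H_s$ — essentially an undiscretized counterpart of Lemma~\ref{lem4.1}, which amounts to controlling the $M$-image of $g(B^H_\cdot)1_{[0,t]}$ and its Malliavin derivative in terms of $\|g\|_{\mathscr H}$ — to be the technically most demanding step.
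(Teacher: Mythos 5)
Your smooth case, your mollification scheme, and your use of Corollary~\ref{lem5.2} for the quadratic covariation term all match the paper's proof. The divergence, and the genuine gap, is in how you handle the stochastic integral term. You reduce everything to a uniform bound $E\bigl|\int_0^tg(B^H_s)dB^H_s\bigr|^2\le C_H\|g\|_{\mathscr H}^2$, which you acknowledge is the hardest step and then do not prove, saying only that it should follow ``much as in Lemma~\ref{lem4.1}''. It does not: Lemma~\ref{lem4.1} bounds the Riemann-type approximants $I^{\pm}_\varepsilon(f,t)$, which are ordinary Lebesgue integrals of products of increments weighted by $ds^{2H}$, not the WIS integral. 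A second-moment bound for $\int_0^tg(B^H_s)dB^H_s$ would require controlling $M\bigl(g(B^H_\cdot)1_{[0,t]}\bigr)$ in $L^2$, where for $0<H<\frac12$ the operator $M$ is a nonlocal fractional-derivative-type operator applied pathwise; nothing in Section~\ref{sec3} addresses this, and it is a substantive open sub-problem in your write-up, not a routine verification.

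The paper avoids the issue entirely with an algebraic observation you already have all the ingredients for: from the smooth-case identity, $\int_0^tf_n(B^H_s)dB^H_s=F_n(B^H_t)-F_n(0)-\frac12[f_n(B^H),B^H]^{(W)}_t$, and since every term on the right converges in $L^2$, the sequence of stochastic integrals is automatically $L^2$-Cauchy; the closedness of the WIS integral then identifies the limit as $\int_0^tf(B^H_s)dB^H_s$, exactly as in your last step. To make the right-hand side converge in $L^2$ (not merely almost surely, which is what your ``locally uniform convergence of $F_n$'' gives for $F_n(B^H_t)$), the paper first localizes on $\Omega_k=\{\sup_{t\le T}|B^H_t|<k\}$ so that $f$ may be taken uniformly bounded and $F$ Lipschitz, whence $F_n\to F$ uniformly; you should insert this localization step as well. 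With those two repairs — replace the unproven integral bound by the subtraction trick, and add the localization — your argument coincides with the paper's.
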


Clearly, this is an analogue of F\"ollmer-Protter-Shiryayev's
formula (see Eisenbaum~\cite{Eisen1}, F\"ollmer {\it et
al}~\cite{Follmer}, Moret--Nualart~\cite{Moret},
Russo--Vallois~\cite{Russo2}, and the references therein). It is an
improvement in terms of the hypothesis on $f$ and it is also quite
interesting itself.
\begin{proof}[Proof of Theorem~\ref{th4.2}]
If $F\in C^2({\mathbb R})$, then this is It\^o's formula since
$$
\left[f(B^H),B^H\right]^{(W)}_t=2H\int_0^tf'(B^H_s)s^{2H-1}ds.
$$
For $F\not\in C^2({\mathbb R})$, by a localization argument we may
assume that the function $f$ is uniformly bounded. In fact, for any
$k\geq 0$ we may consider the set
$$
\Omega_k=\left\{\sup_{0\leq t\leq T}|B^H_t|<k\right\}
$$
and let $f^{[k]}$ be a measurable function such that $f^{[k]}=f$ on
$[-k,k]$ and such that $f^{[k]}$ vanishes outside. Then $f^{[k]}$ is
uniformly bounded and $f^{[k]}\in {\mathscr H}$ for every $k\geq 0$.
Set $\frac{d}{dx}F^{[k]}=f^{[k]}$ and $F^{[k]}=F$ on $[-k,k]$. If
the theorem is true for all uniformly bounded functions on
${\mathscr H}$, then we get the desired formula
$$
F^{[k]}(B^H_t)=F^{[k]}(0)+\int_0^t
f^{[k]}(B^H_s)dB^H_s+\frac12\left[f^{[k]}(B^H),B^H\right]^{(W)}_t
$$
on the set $\Omega_k$. Letting $k$ tend to infinity we deduce the
It\^o formula~\eqref{sec5-eq5.3} for all $f\in {\mathscr H}$ being
left continuous and locally bounded.

Let now $F'=f\in {\mathscr H}$ be uniformly bounded and left
continuous. For any positive integer $n$ we define
$$
F_n(x):=\int_{\mathbb R}F(x-{y})\zeta_n(y)dy,\quad x\in {\mathbb R},
$$
where $\zeta_n$, $n\geq 1$ are the mollifiers defined
by~\eqref{sec4-eq00-4}. Then $F_n\in C^\infty({\mathbb R})$ for all
$n\geq 1$ and the It\^{o} formula
\begin{equation}\label{sec3-eq3-Ito-1}
F_n(B_t^{H})=F_n(0)+\int_0^tf_n(B_s^{H})dB_s^{H}+
H\int_0^tf_n'(B_s^{H})s^{2H-1}ds
\end{equation}
holds for all $n\geq 1$, where $f_n=F_n'$. Moreover using Lebesgue's
dominated convergence theorem, one can prove that as $n\to \infty$,
for each $x$,
$$
F_n(x)\longrightarrow F(x),\quad f_n(x)\longrightarrow f(x),
$$
and $\{f_n\}\subset {\mathscr H}$, $f_n\to f$ in ${\mathscr H}$, as
$n$ tends to infinity. It follows that
\begin{align*}
2H\int_0^tf_n'(B_s^{H})s^{2H-1}ds=[f_n(B^H),B^H]^{(W)}_t
\longrightarrow \left[f(B^H),B^H\right]^{(W)}_t
\end{align*}
in $L^2$ by Corollary~\ref{lem5.2}, as $n$ tends to infinity. It
follows that
\begin{align*}
\int_0^tf_n(B_s^{H})dB_s^{H}&=F_n(B_t^{H})-F_n(0)-
\frac12[f_n(B^H),B^H]^{(W)}_t\\
&\longrightarrow F(B_t^{H})-F(0)-\frac12[f(B^H),B^H]^{(W)}_t
\end{align*}
in $L^2$, as $n$ tends to infinity. This completes the proof since
the integral is closed in $L^2$.
\end{proof}


\section{Integration with respect to the local time}\label{sec5}
In this section we assume that $0<H<\frac12$ and study the integral
\begin{equation}\label{sec6-eq6.1}
\int_{\mathbb R}f(x){\mathscr L}^{H}(dx,t),
\end{equation}
where $f$ is a deterministic function and
$$
{\mathscr L}^{H}(x,t)=2H\int_0^t\delta(B^H_s-x)s^{2H-1}ds
$$
is the weighted local time of fBm $B^{H}$. Recall that the quadratic
covariation $[f(B),B]$ of Brownian motion $B$ can be characterized
as
$$
[f(B),B]_t=-\int_{\mathbb R}f(x){\mathscr L}^{B}(dx,t),
$$
where $f$ is locally square integrable and ${\mathscr L}^{B}(x,t)$
is the local time of Brownian motion. This is called the Bouleau-Yor
identity. More works for this can be found in
Bouleau-Yor~\cite{Bouleau}, Eisenbaum~\cite{Eisen1}, F\"ollmer {\it
et al}~\cite{Follmer}, Feng--Zhao~\cite{Feng},
Peskir~\cite{Peskir1}, Rogers--Walsh~\cite{Rogers2},
Yang--Yan~\cite{Yan2}, and the references therein. However, this is
not true for fractional Brownian motion. For $\frac12<H<1$, Yan {\it
et al}~\cite{Yan3,Yan1} obtained the following Bouleau-Yor identity:
$$
[f(B^H),B^H]_t^{(W)}=-\int_{\mathbb R}f(x){\mathscr L}^{H}(dx,t).
$$
In this section we show that the identity above also holds for
$0<H<\frac12$.

Take $F(x)=(x-a)^{+}-(x-b)^{+}$. Then $F$ is absolutely continuous
with the derivative $F'=1_{(a,b]}\in {\mathscr H}$ being left
continuous and bounded, and the It\^o formula~\eqref{sec5-eq5.3}
yields
\begin{align*}
\left[1_{(a,b]}(B^H),B^H\right]^{(W)}_t&=2F(B^H_t)-2F(0)-
2\int_0^t1_{(a,b]}(B^H_s)dB^H_s\\
&={\mathscr L}^{H}(a,t)-{\mathscr L}^{H}(b,t)
\end{align*}
for all $t\in [0,1]$. Thus, the linearity property of generalized
quadratic covariation deduces the following result.
\begin{lemma}\label{lem6.1}
For any $f_\triangle(x)=\sum_jf_j1_{(a_{j-1},a_j]}(x)\in {\mathscr
E}$, the integral
$$
\int_{\mathbb R}f_\triangle(x){\mathscr
L}^{H}(dx,t):=\sum_jf_j\left[{\mathscr L}^{H}(a_j,t)-{\mathscr
L}^{H}(a_{j-1},t)\right]
$$
exists and
\begin{equation}\label{sec6-eq6.3}
\int_{\mathbb R}f_{\Delta}(x)\mathscr{L}^{H}(dx,t)=
-\left[f_\triangle(B^H),B^H\right]^{(W)}_t
\end{equation}
for all $t\in [0,1]$.
\end{lemma}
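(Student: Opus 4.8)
The plan is to reduce the claim for a general step function $f_\triangle = \sum_j f_j 1_{(a_{j-1},a_j]}$ to the single-indicator case already handled by the It\^o formula~\eqref{sec5-eq5.3}, and then to check consistency of the definition $\int_{\mathbb R} f_\triangle(x)\mathscr{L}^H(dx,t) := \sum_j f_j\big[\mathscr{L}^H(a_j,t)-\mathscr{L}^H(a_{j-1},t)\big]$ with the telescoping structure of the weighted local time. First I would recall that for $F(x)=(x-a)^+-(x-b)^+$ we have $F'=1_{(a,b]}\in\mathscr{H}$, that $F$ is absolutely continuous, and that $1_{(a,b]}$ is left continuous and bounded, so Theorem~\ref{th4.2} applies and gives, after using the Tanaka formula~\eqref{eq2.4} twice (once at level $a$, once at level $b$) to identify $2F(B^H_t)-2F(0)-2\int_0^t 1_{(a,b]}(B^H_s)\,dB^H_s$ with $\mathscr{L}^H(a,t)-\mathscr{L}^H(b,t)$, exactly the displayed computation in the paragraph preceding Lemma~\ref{lem6.1}. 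Hence $[1_{(a,b]}(B^H),B^H]^{(W)}_t = \mathscr{L}^H(a,t)-\mathscr{L}^H(b,t)$ for every pair $a<b$ and every $t\in[0,1]$.

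Next I would invoke the linearity of the map $f\mapsto [f(B^H),B^H]^{(W)}$ on $\mathscr{H}$, which follows from the linearity of the approximating sums $J_\varepsilon(f,t)$ in $f$ together with the $L^2$-existence guaranteed by Theorem~\ref{th4.1}: if $f=\sum_j f_j g_j$ with each $g_j\in\mathscr{H}$ and the sum finite, then $[f(B^H),B^H]^{(W)}_t = \sum_j f_j [g_j(B^H),B^H]^{(W)}_t$. Applying this with $g_j = 1_{(a_{j-1},a_j]}$ yields
\begin{align*}
[f_\triangle(B^H),B^H]^{(W)}_t
&= \sum_j f_j\,[1_{(a_{j-1},a_j]}(B^H),B^H]^{(W)}_t\\
&= \sum_j f_j\big[\mathscr{L}^H(a_{j-1},t)-\mathscr{L}^H(a_j,t)\big]\\
&= -\sum_j f_j\big[\mathscr{L}^H(a_j,t)-\mathscr{L}^H(a_{j-1},t)\big]\\
&= -\int_{\mathbb R} f_\triangle(x)\,\mathscr{L}^H(dx,t),
\end{align*}
where the last equality is just the definition in the statement of the lemma. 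This simultaneously shows that the right-hand side is a well-defined finite sum (so the integral ``exists'' in the trivial sense required here) and establishes identity~\eqref{sec6-eq6.3}.

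The only genuine point to be careful about — and the place I expect the main (mild) obstacle — is verifying that the definition $\int_{\mathbb R} f_\triangle(x)\mathscr{L}^H(dx,t) := \sum_j f_j[\mathscr{L}^H(a_j,t)-\mathscr{L}^H(a_{j-1},t)]$ is independent of the representation of $f_\triangle$ as a step function (refining a partition, or reordering the breakpoints, must not change the value). This is a routine telescoping check: since $x\mapsto\mathscr{L}^H(x,t)$ is continuous, inserting an intermediate breakpoint $a'\in(a_{j-1},a_j)$ replaces the term $f_j[\mathscr{L}^H(a_j,t)-\mathscr{L}^H(a_{j-1},t)]$ by $f_j[\mathscr{L}^H(a',t)-\mathscr{L}^H(a_{j-1},t)] + f_j[\mathscr{L}^H(a_j,t)-\mathscr{L}^H(a',t)]$, which sums to the same thing; and the pathwise identity $[1_{(a,b]}(B^H),B^H]^{(W)}_t = \mathscr{L}^H(a,t)-\mathscr{L}^H(b,t)$ forces the sum to agree with the (representation-free) quantity $[f_\triangle(B^H),B^H]^{(W)}_t$ anyway. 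Once that bookkeeping is dispatched, nothing further is needed: existence and the Bouleau--Yor identity~\eqref{sec6-eq6.3} both drop out of the single-indicator case plus linearity.
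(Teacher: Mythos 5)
Your proposal is correct and follows essentially the same route as the paper: apply the It\^o formula of Theorem~\ref{th4.2} to $F(x)=(x-a)^{+}-(x-b)^{+}$, identify the result with ${\mathscr L}^{H}(a,t)-{\mathscr L}^{H}(b,t)$ via the Tanaka formula~\eqref{eq2.4}, and conclude by linearity of the generalized quadratic covariation. Your extra remark on the well-definedness of the sum under refinement of the partition is a harmless addition that the paper leaves implicit.
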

Thanks to the density of ${\mathscr E}$ in ${{\mathscr H}}$, we can
then extend the definition of integration with respect to $x\mapsto
{\mathscr L}^H(x,t)$ to the elements of ${\mathscr H}$ in the
following manner:
\begin{equation*}
\int_{\mathbb R}f(x){\mathscr L}^{H}(dx,t):=\lim_{n\to
\infty}\int_{\mathbb R}f_{\triangle,n}(x){\mathscr L}^{H}(dx,t)
\end{equation*}
in $L^2$ for $f\in {{\mathscr H}}$ provided $f_{\triangle,n}\to f$
in ${{\mathscr H}}$, as $n$ tends to infinity, where
$\{f_{\triangle,n}\}\subset {\mathscr E}$. The limit obtained does
not depend on the choice of the sequence $\{f_{\triangle,n}\}$ and
represents the integral of $f$  with respect to ${\mathscr L}^{H}$.
Together this and Corollary~\ref{lem5.2} lead to the Bouleau-Yor
identity
\begin{equation}\label{sec6-eq6.4}
\left[f(B^H),B^H\right]^{(W)}_t=-\int_{\mathbb R}f(x){\mathscr
L}^{H}(dx,t)
\end{equation}
for all $t\in [0,1]$.
\begin{corollary}
Let $0<H<\frac12$ and let $f,f_1,f_2,\ldots\in {\mathscr H}$. If
$f_n\to f$ in ${\mathscr H}$, as $n$ tends to infinity, we then have
\begin{align*}
\int_{\mathbb R}f_n(x){\mathscr L}^{H}(dx,t)\longrightarrow
\int_{\mathbb R}f(x){\mathscr L}^{H}(dx,t)
\end{align*}
in $L^2$, as $n$ tends to infinity.
\end{corollary}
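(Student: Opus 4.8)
The plan is to reduce the statement to the Bouleau--Yor identity~\eqref{sec6-eq6.4}, the $L^2$-bound of Theorem~\ref{th4.1}, and the continuity already recorded in Corollary~\ref{lem5.2}. Since $f_n,f\in{\mathscr H}$, the extended integral is well defined and identity~\eqref{sec6-eq6.4} applies to each of them, giving
$$
\int_{\mathbb R}f_n(x){\mathscr L}^{H}(dx,t)=-[f_n(B^H),B^H]^{(W)}_t,\qquad \int_{\mathbb R}f(x){\mathscr L}^{H}(dx,t)=-[f(B^H),B^H]^{(W)}_t
$$
for every $t\in[0,1]$. Subtracting these and using the linearity of $g\mapsto[g(B^H),B^H]^{(W)}_t$ (equivalently, the linearity built into the definition of $g\mapsto\int_{\mathbb R}g(x){\mathscr L}^{H}(dx,t)$), we get
$$
\int_{\mathbb R}f_n(x){\mathscr L}^{H}(dx,t)-\int_{\mathbb R}f(x){\mathscr L}^{H}(dx,t)=-\bigl[(f_n-f)(B^H),B^H\bigr]^{(W)}_t.
$$

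First I would apply Theorem~\ref{th4.1} to the function $f_n-f\in{\mathscr H}$, which bounds the $L^2$-norm of the right-hand side:
$$
E\left|\int_{\mathbb R}f_n(x){\mathscr L}^{H}(dx,t)-\int_{\mathbb R}f(x){\mathscr L}^{H}(dx,t)\right|^2=E\left|\bigl[(f_n-f)(B^H),B^H\bigr]^{(W)}_t\right|^2\leq C_H\|f_n-f\|_{\mathscr H}^2.
$$
Since $f_n\to f$ in ${\mathscr H}$ by hypothesis, the right-hand side tends to $0$, which is precisely the asserted $L^2$-convergence. One may equally phrase the argument through Corollary~\ref{lem5.2}: it gives $[f_n(B^H),B^H]^{(W)}_t\to[f(B^H),B^H]^{(W)}_t$ in $L^2$, and~\eqref{sec6-eq6.4} transports this convergence verbatim to the local-time integrals.

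There is no real obstacle here; the corollary is an immediate consequence of results already in place. The only point worth a brief remark is that $\int_{\mathbb R}g(x){\mathscr L}^{H}(dx,t)$ for a general $g\in{\mathscr H}$ is itself defined as an $L^2$-limit of integrals of elementary functions, so one should note that both the Bouleau--Yor identity~\eqref{sec6-eq6.4} and the estimate of Theorem~\ref{th4.1} survive this limiting procedure; but that has already been verified in the discussion immediately preceding the corollary (independence of the approximating sequence, together with Corollary~\ref{lem5.2}). Hence nothing further is needed.
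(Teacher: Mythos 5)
Your proof is correct and follows exactly the route the paper intends: the paper states this corollary without proof, treating it as an immediate consequence of the Bouleau--Yor identity~\eqref{sec6-eq6.4} together with Corollary~\ref{lem5.2} (equivalently, the bound of Theorem~\ref{th4.1} applied to $f_n-f$), which is precisely the argument you spell out. Nothing is missing.
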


According to Theorem~\ref{th4.2}, we get an analogue of
Bouleau-Yor's formula.
\begin{corollary}\label{cor6.1}
Let $0<H<\frac12$ and let $f\in {\mathscr H}$ be left continuous. If
$F$ is an absolutely continuous function with the derivative $F'=f$,
then the following It\^o type formula holds:
\begin{equation}\label{sec6-eq6.5}
F(B_t^{H})=F(0)+\int_0^tf(B_s^{H})dB_s^{H}-\frac12\int_{\mathbb
R}f(x){\mathscr L}^{H}(dx,t).
\end{equation}
\end{corollary}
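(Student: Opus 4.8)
The plan is to deduce this directly from Theorem~\ref{th4.2} and the Bouleau--Yor identity~\eqref{sec6-eq6.4}, so the argument is essentially a one-line substitution. Under the stated hypotheses ($f\in{\mathscr H}$ left continuous, $F$ absolutely continuous with $F'=f$), Theorem~\ref{th4.2} already gives
$$
F(B^H_t)=F(0)+\int_0^tf(B^H_s)\,dB^H_s+\frac12\bigl[f(B^H),B^H\bigr]^{(W)}_t,
$$
and the generalized quadratic covariation on the right is a well-defined element of $L^2$ by Theorem~\ref{th4.1}. Hence the only task is to rewrite the bracket in terms of the weighted local time.

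First I would invoke~\eqref{sec6-eq6.4}, which asserts $\bigl[f(B^H),B^H\bigr]^{(W)}_t=-\int_{\mathbb R}f(x){\mathscr L}^{H}(dx,t)$ for every $f\in{\mathscr H}$, the right-hand side being the $L^2$-limit of $\int_{\mathbb R}f_{\triangle,n}(x){\mathscr L}^H(dx,t)$ along any $\{f_{\triangle,n}\}\subset{\mathscr E}$ with $f_{\triangle,n}\to f$ in ${\mathscr H}$. This identity rests only on Lemma~\ref{lem6.1} for elementary functions, Corollary~\ref{lem5.2}, and the density of ${\mathscr E}$ in ${\mathscr H}$, so it is available without any regularity on $f$ beyond $f\in{\mathscr H}$. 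Substituting it into the displayed formula yields exactly~\eqref{sec6-eq6.5}.

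Since both ingredients are already proved, there is no genuine obstacle; the corollary is a corollary in the literal sense. The only point to verify is compatibility of hypotheses: Theorem~\ref{th4.2} requires left continuity of $f$, while~\eqref{sec6-eq6.4} needs only $f\in{\mathscr H}$, so under the joint assumption of the corollary both are simultaneously in force and the substitution is legitimate. If a self-contained proof were preferred, one could instead mimic the proof of Theorem~\ref{th4.2}: establish~\eqref{sec6-eq6.5} first for $F'=f_\triangle\in{\mathscr E}$ via Lemma~\ref{lem6.1}, then pass to the limit along mollifications $F_n$ using Corollary~\ref{lem5.2}, the ${\mathscr H}$-continuity of $f\mapsto\int_{\mathbb R}f(x){\mathscr L}^H(dx,t)$, and the closedness of the WIS integral in $L^2$; but the direct deduction from Theorem~\ref{th4.2} and~\eqref{sec6-eq6.4} is the natural route.
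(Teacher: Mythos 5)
Your proposal is correct and matches the paper's own (implicit) argument exactly: the paper presents this corollary as an immediate consequence of Theorem~\ref{th4.2} combined with the Bouleau--Yor identity~\eqref{sec6-eq6.4}, which is precisely the one-line substitution you carry out. Your remark on the compatibility of hypotheses (left continuity needed only for Theorem~\ref{th4.2}, membership in ${\mathscr H}$ sufficing for~\eqref{sec6-eq6.4}) is a correct and slightly more careful reading than the paper itself offers.
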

Recall that if $F$ is the difference of two convex functions, then
$F$ is an absolutely continuous function with derivative of bounded
variation. Thus, the It\^o-Tanaka formula
\begin{align*}
F(B_t^H)&=F(0)+\int_0^tF^{'}(B_s^H)dB_s^H+\frac12\int_{\mathbb
R}{\mathscr L}^{H}(x,t)F''(dx)\\
&\equiv F(0)+\int_0^tF^{'}(B_s^H)dB_s^H-\frac12\int_{\mathbb
R}F'(x){\mathscr L}^{H}(dx,t)
\end{align*}
holds. This is given by Coutin {\it et al}~\cite{Cout} (see also Hu
{\it et al}~\cite{Hu1}).

\begin{remark}
{\rm

By the proof similar to Lemma 3.1 in
Gradinaru--Nourdin~\cite{Grad3}, one can obtain the following
convergence (see also Gradinaru--Nourdin~\cite{Grad}):
\begin{equation}\label{sec5-eq5.11-0}
\lim_{n\to \infty}\sum_{j=1}^{n}\left(\Lambda_j\right)^{2H-1}g(B^H_{t_j})
(B^H_{t_{j}}-B^H_{t_{j-1}})^{2}=\int_0^tg(B^H_s)s^{2H-1}ds
\end{equation}
almost surely, where $\pi_n=\{0=t_0<t_1 <\cdots< t_n=t\}$ denotes an
arbitrary partition of the interval $[0, t]$ with $\|\pi_n\|=
\sup_{j}(t_{j}-t_{j-1})\to 0$, $\Lambda_{j}
=\frac{t_j}{t_j-t_{j-1}}$ and $g\in C({\mathbb R})$. Thus, similar
to proof of Theorem~\ref{th4.1} we can show that the convergence
$$
2H\lim_{n\to
\infty}\sum_{j=1}^{n}\left(\Lambda_j\right)^{2H-1}\{f(B^H_{t_j})
-f(B^H_{t_{j-1}})\} (B^H_{t_j}-B^H_{t_{j-1}})=-\int_{\mathbb
R}f(x){\mathscr L}^H(dx,t)
$$
holds, which deduces
$$
2H\lim_{n\to
\infty}\sum_{j=1}^{n}\left(\Lambda_j\right)^{2H-1}\{f(B^H_{t_j})
-f(B^H_{t_{j-1}})\} (B^H_{t_j}-B^H_{t_{j-1}})=[f(B^H),B^H]^{(W)}_t,
$$
where $f\in {\mathscr H}$ and the limits are uniform in probability.

}
\end{remark}


\section{The time-dependent case}~\label{sec6}
In this section we consider the time-dependent case. For a
measurable function $f$ on ${\mathbb R}\times {\mathbb R}_{+}$ we
define the generalized quadratic covariation
$[f(B^H,\cdot),B^H]^{(W)}$ of $f(B^H,\cdot)$ and $B^H$ as follows
\begin{equation}\label{sec6-eq6.1-0}
[f(B^H,\cdot),B^H]^{(W)}_t:=\lim_{\varepsilon\downarrow
0}\frac{1}{\varepsilon^{2H}}\int_0^t\left\{f(B^{H}_{
s+\varepsilon},s+\varepsilon)
-f(B^{H}_s,s)\right\}(B^{H}_{s+\varepsilon}-B^{H}_s)ds^{2H}
\end{equation}
for $t\in [0,T]$, provided the limit exists uniformly in
probability. We prove the existence of the quadratic covariation.

Consider the set ${\mathscr H}_{\ast}$ of measurable functions $f$
on ${\mathbb R}\times {\mathbb R}_{+}$ such that the function
$t\mapsto f(\cdot,t)$ is continuous and $\|f\|_{{\mathscr
H}_{\ast}}<+ \infty$, where
\begin{align*}
\|f\|_{{\mathscr H}_{\ast}}=\sqrt{\int_0^T\int_{\mathbb
R}|f(x,s)|^2e^{-\frac{x^2}{2s^{2H}}}\frac{dxds}{\sqrt{2\pi}s^{1-H}}}
+\sqrt{\int_0^T\int_{\mathbb R}|f(x,s)|^2
e^{-\frac{x^2}{2s^{2H}}}\frac{dxds}{\sqrt{2\pi}(T-s)^{1-H}}}
\end{align*}
with $\varphi_s(x)=\frac1{\sqrt{2\pi}s^H}e^{-\frac{x^2}{2s^{2H}}}$.
Then ${\mathscr H}_{\ast}$ is a Banach space and the set ${\mathscr
E}_{\ast}$ of elementary functions of the form
\begin{equation}\label{sec6-eq6.4-0}
f_\triangle(x,t)=\sum_{i,j}f_{ij}1_{(x_{i-1},x_{i}]}(x)
1_{(s_{j-1},s_{j}]}(t)
\end{equation}
is dense in ${\mathscr H}_{\ast}$, where $\{x_i,0\leq i\leq n\}$ is
an finite sequence of real numbers such that $x_i<x_{i+1}$,
$\{s_j,0\leq j\leq m\}$ is a subdivision of $[0,T]$ and $(f_{ij})$
is a matrix of order $n\times m$. Moreover, ${\mathscr H}_{\ast}$
contains the set ${\mathscr H}_{\ast,\gamma}$ with $\gamma>2$ of
measurable functions $f$ on ${\mathbb R}$ such that
$$
\int_{0}^T\int_{\mathbb R}|f(x,s)|^{\gamma}e^{-\frac{x^2}{2s^{2H}}}
\frac{dxds}{\sqrt{2\pi}s^{1-H}} <\infty.
$$
As a corollary of Theorem A, we have
\begin{equation}\label{sec6-eq6.2-0}
\lim_{\varepsilon\downarrow
0}\frac{1}{\varepsilon^{2H}}\int_0^ts^{2H-1}g(B^{H}_{s},s)
(B^{H}_{s+\varepsilon}-B^{H}_s)^2ds=\int_0^tg(B^{H}_s,s)s^{2H-1}ds
\end{equation}
almost surely, for all $t\geq 0$ if $g$ is continuous. This proves
the following identity:
\begin{equation}\label{sec6-eq6.3-0}
\left[f(B^{H},\cdot),B^{H}\right]^{(W)}_t=2H\int_0^t\frac{\partial
f}{\partial x}(B^{H}_s,s)s^{2H-1}ds
\end{equation}
for all $t\geq 0$, provided $f\in C^{1,1}({\mathbb R}\times{\mathbb
R}_+)$. Thus, similar to proof of Theorem~\ref{th4.1}, one can
obtain the next theorem.

\begin{theorem}\label{th4.20}
Let $0<H<\frac12$. If $f\in {\mathscr H}_{\ast}$, then the
generalized quadratic covariation $[f(B^H,\cdot),B^H]^{(W)}$ exists
and
\begin{align}
E\left|[f(B^H,\cdot),B^H]_t^{(W)}\right|^2\leq C_H\|f\|_{{\mathscr
H}_{\ast}}^2
\end{align}
for all $t\in [0,T]$.
\end{theorem}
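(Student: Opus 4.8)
The plan is to mirror the proof of Theorem~\ref{th4.1} step by step, carrying the extra time-variable $s$ along through every estimate. First I would record the analogue of the decomposition~\eqref{sec4-eq4.000000}, writing
\begin{align*}
\frac{1}{\varepsilon^{2H}}\int_0^t\left\{f(B^{H}_{s+\varepsilon},s+\varepsilon)-f(B^{H}_s,s)\right\}(B^{H}_{s+\varepsilon}-B^{H}_s)ds^{2H}
=I_\varepsilon^{+}(f,t)-I_\varepsilon^{-}(f,t),
\end{align*}
where now $I_\varepsilon^{+}(f,t)=\varepsilon^{-2H}\int_0^t f(B^H_{s+\varepsilon},s+\varepsilon)(B^H_{s+\varepsilon}-B^H_s)ds^{2H}$ and $I_\varepsilon^{-}(f,t)=\varepsilon^{-2H}\int_0^t f(B^H_s,s)(B^H_{s+\varepsilon}-B^H_s)ds^{2H}$. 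Since $t\mapsto f(\cdot,t)$ is assumed continuous, for fixed $s$ and small $\varepsilon$ the time-shift is harmless, and the same Wick--It\^o--Skorohod expansion used in Lemma~\ref{lem4.1} applies verbatim: expanding $E[f(B^H_s,s)f(B^H_r,r)(B^H_{s+\varepsilon}-B^H_s)(B^H_{r+\varepsilon}-B^H_r)]$ via~\eqref{sec2-eq2.5-1000} produces the three terms $\Psi_\varepsilon(s,r,k,\cdot)$, $k=1,2,3$, exactly as before but with $f(\cdot,s)$, $f(\cdot,r)$ in place of $f$. Because Lemmas~\ref{lem3.3}, \ref{lem3.4}, \ref{lem3.6} are pointwise-in-time statements (they bound $E[f'(B^H_s)f'(B^H_r)]$ etc.\ for fixed $s>r$ by $(s-r)^{-2H}$ times $\sqrt{E[f^2(B^H_s)]E[f^2(B^H_r)]}$), they apply with $f$ replaced by $x\mapsto f(x,s)$ and $x\mapsto f(x,r)$; likewise the inequality~\eqref{eq4.5} becomes $E[f^2(B^H_r,r)]\le (s/r)^H E[f^2(B^H_s,r)]$, and the only new point is to also pass from $E[f^2(B^H_s,r)]$ or $E[f^2(B^H_r,s)]$ to $E[f^2(B^H_s,s)]$ using the continuity of $t\mapsto f(\cdot,t)$ together with dominated convergence — but for the \emph{uniform} $L^2$ bound one does not even need that, since one can keep the two time-arguments separate and integrate.

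Concretely, I would show the analogue of~\eqref{eq4.4},
\begin{equation*}
\frac{1}{\varepsilon^{4H}}\left|\int_0^t\int_0^t\Psi_\varepsilon(s,r,k)\,ds^{2H}dr^{2H}\right|\le C_H\|f\|_{{\mathscr H}_\ast}^2,\qquad k=1,2,3,
\end{equation*}
by the identical chain of elementary integral estimates that appears after the display~\eqref{eq4.1000}: the kernels $r^{2H-1}(s-\varepsilon-r)^{-2H}$, $r^{2H-1}(r+\varepsilon-s)^{-2H}$ and $r^{2H-1}(r+\varepsilon-s)^{-2H}$ on $[0,\varepsilon]$ are untouched by the time variable, and the factor $E[f^2(B^H_s)]$ is simply replaced by $E[f^2(B^H_s,s)]$ (and the cross term by $\sqrt{E[f^2(B^H_s,s)]E[f^2(B^H_r,r)]}\le \tfrac12(E[f^2(B^H_s,s)]+E[f^2(B^H_r,r)])$ after a Cauchy--Schwarz). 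Summing, one obtains $E|I_\varepsilon^{\pm}(f,t)|^2\le C_H\|f\|_{{\mathscr H}_\ast}^2$ first for $f\in C_0^\infty({\mathbb R}\times{\mathbb R}_+)$, hence $E|J_\varepsilon(f,t)|^2\le C_H\|f\|_{{\mathscr H}_\ast}^2$. This is the quantitative estimate that drives everything.

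Next, for the existence of the limit I would argue exactly as in the proof of Theorem~\ref{th4.1}: on the smooth class, identity~\eqref{sec6-eq6.3-0} (which follows from~\eqref{sec6-eq6.2-0}, itself a consequence of Theorem~A applied with $Y_s=g(B^H_s,s)$) shows $J_\varepsilon(f,t)\to 2H\int_0^t \frac{\partial f}{\partial x}(B^H_s,s)s^{2H-1}ds$ uniformly in probability; a Lemma~\ref{lem3.3}-type bound shows the right-hand side is controlled by $C_H\|f\|_{{\mathscr H}_\ast}^2$. For general $f\in{\mathscr H}_\ast$ I would mollify \emph{in the space variable only}, $f_n(x,t)=\int_{\mathbb R}f(x-y,t)\zeta_n(y)dy$, so that $t\mapsto f_n(\cdot,t)$ stays continuous, $f_n\in C^\infty({\mathbb R})\cap{\mathscr H}_\ast$ in $x$ for each $t$, and $f_n\to f$ in ${\mathscr H}_\ast$; then the Cauchy-in-probability estimate
\begin{equation*}
P\big(|J_{\varepsilon_1}(f,t)-J_{\varepsilon_2}(f,t)|\ge\delta\big)\le\frac{C_H}{\delta^2}\|f-f_n\|_{{\mathscr H}_\ast}^2+P\big(|J_{\varepsilon_1}(f_n,t)-J_{\varepsilon_2}(f_n,t)|\ge\tfrac{\delta}{3}\big)
\end{equation*}
together with convergence of $J_\varepsilon(f_n,t)$ gives a limit in probability, and the bound~\eqref{th4.1-eq} passes to the limit. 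The main — really the only — obstacle is bookkeeping: one must check that the ``smooth-in-$x$'' mollification preserves membership in ${\mathscr H}_\ast$ and the continuity in $t$, and that the pointwise-in-time lemmas of Section~\ref{sec3} indeed suffice (no joint regularity in $(x,t)$ is ever needed because every estimate freezes $s$ and $r$ before integrating). Once that is observed, the proof is a line-by-line transcription of Lemma~\ref{lem4.1} and Theorem~\ref{th4.1} with $f(x)$ replaced by $f(x,s)$, and I would simply say so rather than reproduce the computations.
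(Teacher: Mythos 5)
Your overall strategy --- transcribing Lemma~\ref{lem4.1} and Theorem~\ref{th4.1} with $f(x)$ replaced by $f(x,s)$, noting that the lemmas of Section~\ref{sec3} freeze $s$ and $r$ before integrating, and combining the resulting uniform bound $E|J_\varepsilon(f,t)|^2\le C_H\|f\|_{{\mathscr H}_\ast}^2$ with convergence on a smooth dense class via the identity~\eqref{sec6-eq6.3-0} --- is exactly what the paper intends; its own justification of Theorem~\ref{th4.20} is the single sentence that the argument is ``similar to the proof of Theorem~\ref{th4.1}'', so your quantitative part supplies precisely the computations the authors omit, and that part is sound.

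The one step that does not go through as written is the decision to mollify in the space variable only. The Cauchy-in-probability argument requires a dense class on which $\lim_{\varepsilon\downarrow 0}J_\varepsilon(f_n,t)$ is already known to exist, and the only available convergence statement is~\eqref{sec6-eq6.3-0}, which is established for $f\in C^{1,1}({\mathbb R}\times{\mathbb R}_+)$, i.e.\ it needs differentiability in $t$ as well. If $f_n(x,t)=\int_{\mathbb R}f(x-y,t)\zeta_n(y)\,dy$ is smooth in $x$ but merely continuous in $t$, the decomposition behind~\eqref{sec6-eq6.3-0} leaves the remainder
$\varepsilon^{-2H}\int_0^t\{f_n(B^H_s,s+\varepsilon)-f_n(B^H_s,s)\}(B^H_{s+\varepsilon}-B^H_s)\,ds^{2H}$,
and continuity in $t$ only gives $f_n(B^H_s,s+\varepsilon)-f_n(B^H_s,s)=o(1)$, which against the factor $(B^H_{s+\varepsilon}-B^H_s)\varepsilon^{-2H}=O(\varepsilon^{-H-\delta})$ does not force this term to vanish; so the convergence of $J_\varepsilon(f_n,t)$ for your approximants is unjustified and the Cauchy estimate has nothing to anchor it. The repair is the one the paper makes explicit in the proof of Theorem~\ref{th5.4}: mollify in both variables, $f_n(x,s)=\int\!\!\int_{{\mathbb R}^2}f(x-y,s-r)\zeta_n(y)\zeta_n(r)\,dy\,dr$, so that $f_n\in C^{\infty}\subset C^{1,1}$ and~\eqref{sec6-eq6.3-0} applies, while the assumed continuity of $t\mapsto f(\cdot,t)$ still yields $f_n\to f$ in ${\mathscr H}_\ast$. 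With that substitution the rest of your argument is a correct line-by-line transcription of the proof of Theorem~\ref{th4.1}.
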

By using the above result, we immediately get an extension of
It\^{o} formula stated as follows.
\begin{theorem}\label{th5.4}
Let $0<H<\frac12$ and let $F\in C^{1,1}({\mathbb R}\times {\mathbb
R}_{+})$. Suppose that the function $\frac{\partial }{\partial
x}F=f\in {\mathscr H}_{\ast}$. Then the It\^o type formula
\begin{align*}
F(B^H_t,t)=F(0,0)+&\int_0^tf(B^H_s,s)dB^H_s+\int_0^t\frac{\partial
}{\partial
t}F(B^H_s,s)ds+\frac12\left[f(B^H,\cdot),B^H\right]^{(W)}_t
\end{align*}
holds.
\end{theorem}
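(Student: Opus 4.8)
The plan is to imitate the proof of Theorem~\ref{th4.2}, replacing the one–variable mollification used there by a mollification in the space variable, and using the identity~\eqref{sec6-eq6.3-0} together with the classical It\^o formula~\eqref{eq2.1} in place of~\eqref{sec4-eq4.15}. As a preliminary reduction I would pass, by localization, to the case in which $f$ is uniformly bounded: for $k\geq 1$ put $\Omega_k=\{\sup_{0\leq t\leq T}|B^H_t|<k\}$, choose $f^{[k]}\in{\mathscr H}_{\ast}$ that coincides with $f$ on $[-k,k]\times[0,T]$, is bounded and has compact support in the space variable (a smooth spatial cutoff works and keeps its primitive $F^{[k]}$ in $C^{1,1}$ and equal to $F$ on $[-k,k]\times[0,T]$), establish the formula for $F^{[k]}$, and then let $k\to\infty$ on $\Omega_k$. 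On $\Omega_k$ the process $B^H$ never leaves $[-k,k]$, so every quantity appearing below is dominated by a deterministic constant, which is what makes the $L^2$ limits legitimate.

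Assuming now $f=\partial_x F\in{\mathscr H}_{\ast}$ bounded (and continuous, being the $x$–derivative of a function that is $C^{1}$ in $x$), I would set
\[
F_n(x,t)=\int_{\mathbb R}F(x-y,t)\zeta_n(y)\,dy,\qquad n\geq 1,
\]
with $\zeta_n$ the mollifiers of~\eqref{sec4-eq00-4}. Then $F_n\in C^{2,1}({\mathbb R}\times{\mathbb R}_{+})$: its $x$–derivatives up to order two are $\int F(z,t)\zeta_n^{(j)}(x-z)\,dz$ and $\partial_t F_n(x,t)=\int\partial_t F(x-y,t)\zeta_n(y)\,dy$, all continuous. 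Moreover $f_n:=\partial_x F_n$ is exactly the spatial mollification of $f$, and $f_n\in C^{1,1}$ because $\partial_t f_n(x,t)=\int\partial_t F(z,t)\zeta_n'(x-z)\,dz$ is continuous. Hence~\eqref{eq2.1} applies to $F_n$, and by~\eqref{sec6-eq6.3-0} applied to $f_n$ the last term $H\int_0^t\partial_x^2F_n(B^H_s,s)s^{2H-1}ds$ equals $\tfrac12[f_n(B^H,\cdot),B^H]^{(W)}_t$, so that
\[
F_n(B^H_t,t)=F_n(0,0)+\int_0^tf_n(B^H_s,s)\,dB^H_s+\int_0^t\partial_t F_n(B^H_s,s)\,ds+\tfrac12\left[f_n(B^H,\cdot),B^H\right]^{(W)}_t .
\]

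The final step is to let $n\to\infty$ in this identity on $\Omega_k$. Since $F$, $\partial_t F$ and $f$ are continuous, $F_n\to F$, $\partial_t F_n\to\partial_t F$ and $f_n\to f$ locally uniformly; hence $F_n(B^H_t,t)\to F(B^H_t,t)$, $F_n(0,0)\to F(0,0)$ and $\int_0^t\partial_t F_n(B^H_s,s)ds\to\int_0^t\partial_t F(B^H_s,s)ds$ almost surely and, by boundedness on $\Omega_k$, in $L^2$. Because $f^{[k]}$ is bounded with compact $x$–support, dominated convergence gives $f_n\to f$ in ${\mathscr H}_{\ast}$, so Theorem~\ref{th4.20} and linearity yield $[f_n(B^H,\cdot),B^H]^{(W)}_t\to[f(B^H,\cdot),B^H]^{(W)}_t$ in $L^2$ (the argument of Corollary~\ref{lem5.2}). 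Consequently $\int_0^tf_n(B^H_s,s)dB^H_s$ converges in $L^2$; since also $f_n(B^H_\cdot,\cdot)\to f(B^H_\cdot,\cdot)$ in $L^2(\Omega\times[0,T])$ by dominated convergence, the closedness of the WIS integral in $L^2$ identifies the limit as $\int_0^tf(B^H_s,s)dB^H_s$. This proves the formula on each $\Omega_k$, and letting $k\to\infty$ gives it on $\Omega$.

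The hard part is not a new estimate but the regularity and localization book-keeping: one must verify that mollifying a merely $C^{1,1}$ function in the space variable produces an element of $C^{2,1}$ whose $x$–derivative is $C^{1,1}$, so that~\eqref{eq2.1} and~\eqref{sec6-eq6.3-0} are genuinely applicable to $F_n$ and $f_n$, and one must arrange the truncation $f^{[k]}$ so that it lies in ${\mathscr H}_{\ast}$, is continuous in $t$, and possesses a $C^{1,1}$ primitive agreeing with $F$ on $[-k,k]\times[0,T]$; granting this, $\|f_n-f\|_{{\mathscr H}_{\ast}}\to 0$ follows from dominated convergence, and the remaining steps are a transcription of the proofs of Theorems~\ref{th4.1} and~\ref{th4.2}.
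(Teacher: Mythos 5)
Your argument is correct and follows the paper's strategy for Theorem~\ref{th5.4}, which is exactly the smoothing-plus-localization scheme of Theorem~\ref{th4.2} transplanted to the time-dependent setting: mollify, apply the smooth It\^o formula, identify the second-order term with the generalized quadratic covariation via~\eqref{sec6-eq6.3-0}, and pass to the limit using Theorem~\ref{th4.20} and the $L^2$-closedness of the integral. The one place you genuinely diverge is the point the paper singles out as its ``main different key point'': the paper mollifies $F$ in \emph{both} variables, setting $F_n(x,s)=\int\int_{{\mathbb R}^2}F(x-y,s-r)\zeta_n(y)\zeta_n(r)\,dy\,dr$, whereas you mollify only in $x$. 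Your variant works, and you correctly supply the observation that makes it work: although $F\in C^{1,1}$ need not have a mixed derivative $\partial_t\partial_x F$, the spatial convolution $f_n=\partial_x F_n$ does lie in $C^{1,1}$, because $\partial_t f_n(x,t)=\int\partial_t F(z,t)\,\zeta_n'(x-z)\,dz$ exists and is continuous; hence both~\eqref{eq2.1} and~\eqref{sec6-eq6.3-0} apply to $F_n$ and $f_n$. Your route is arguably a bit cleaner, since the paper's time-mollification (with $\zeta_n$ supported in $(0,2/n)$) evaluates $F(\cdot,s-r)$ at times outside $[0,T]$ for $s$ near the endpoints and thus tacitly requires an extension of $F$ in the time variable, an issue the single spatial mollification avoids; what the double mollification buys in exchange is that one never has to argue about the regularity of $\partial_t f_n$, since $F_n$ is then smooth in both variables. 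Either way the limiting arguments are identical, so the proposal is a valid proof.
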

\begin{proof}
Similar to the proof of Theorem~\ref{th4.2}, we can use smoothing
procedure to prove our result. The main different key point is the
following approximation:
$$
F_n(x,s):=\int\int_{{\mathbb
R}^2}F(x-y,s-r)\zeta_n(y)\zeta_n(r)dydr,\qquad n\geqslant 1,
$$
where $\zeta_n$, $n\geq 1$ are the mollifiers defined
by~\eqref{sec4-eq00-4}.
\end{proof}
We next consider the integral
\begin{equation}\label{sec6-eq6.2}
\int_0^t\int_{\mathbb R}f(x,s){\mathscr L}^{H}(dx,ds),
\end{equation}
where $f$ is a deterministic function. For elementary function
$f_\triangle\in {\mathscr E}_{\ast}$ of the
form~\eqref{sec6-eq6.4-0} we define integration with respect to
local time ${\mathscr L}^{H}$ as follows
\begin{align*}
\int_0^t\int_{\mathbb R}f_\triangle(x,s)&{\mathscr
L}^{H}(dx,ds):=\sum_{i,j}f_{ij}\left[{\mathscr
L}^{H}(x_{i},s_{j})\right.\\
&\hspace{1cm}\left.-{\mathscr L}^{H}(x_{i},s_{j-1})-{\mathscr
L}^{H}(x_{i-1},s_{j})+{\mathscr L}^{H}(x_{i-1},s_{j-1})\right],
\end{align*}
for all $t\in [0,T]$. Notice that
\begin{align*}
{\mathscr L}^{H}&(x_{i},s_{j})-{\mathscr
L}^{H}(x_{i},s_{j-1})-{\mathscr L}^{H}(x_{i-1},s_{j})+{\mathscr
L}^{H}(x_{i-1},s_{j-1})\\
&=\left[{\mathscr L}^{H}(x_{i},s_{j})-{\mathscr
L}^{H}(x_{i-1},s_{j})\right]-\left[{\mathscr
L}^{H}(x_{i},s_{j-1})-{\mathscr L}^{H}(x_{i-1},s_{j-1})\right]\\
&=-\left[{1}_{(x_{i-1},x_{i}]}(B^H),B^H
\right]^{(W)}_{s_{j}}+\left[{1}_{(x_{i-1},x_{i}]}(B^H),B^H
\right]^{(W)}_{s_{j-1}}\\
&=-\left[{1}_{(x_{i-1},x_{i}]}(B^H) {1}_{(s_{j-1},s_{j}]}(\cdot),B^H
\right]^{(W)}_t
\end{align*}
for all $i,j$. We get the identity
\begin{equation}
\int_0^t\int_{\mathbb R}f_{\Delta}(x,s)\mathscr{L}^{H}(dx,ds)=
-\left[f_\triangle(B^H,\cdot),B^H\right]^{(W)}_t
\end{equation}
for all $t\in [0,T]$. Moreover, for $f\in {\mathscr H}_{\ast}$ we
can define
\begin{equation*}
\int_0^t\int_{\mathbb R}f(x,s){\mathscr L}^{H}(dx,ds):=\lim_{n\to
\infty}\int_0^t\int_{\mathbb R}f_{\triangle,n}(x,s){\mathscr
L}^{H}(dx,ds),\qquad{\text {in $L^2$}}
\end{equation*}
for all $t\in [0,1]$ if $f_{\triangle,n}\to f$ in ${\mathscr
H}_{\ast}$, as $n$ tends to infinity, where
$\{f_{\triangle,n}\}\subset {\mathscr E}_{\ast}$.
\begin{theorem}\label{prop6.1}
Let $0<H<\frac12$ and $f\in {\mathscr H}_{\ast}$. Then the
integral~\eqref{sec6-eq6.2} exists in $L^2$ and the Bouleau-Yor
identity takes the form
\begin{equation}\label{sec4-eq4.8}
\left[f(B^{H},\cdot),B^{H}\right]^{(W)}_t=-\int_0^t\int_{\mathbb
R}f(x,s){\mathscr L}^{H}(dx,ds)
\end{equation}
for all $t\in [0,T]$.
\end{theorem}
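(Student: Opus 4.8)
The plan is to mimic, almost verbatim, the derivation of the time‑independent Bouleau--Yor identity~\eqref{sec6-eq6.4}, with ${\mathscr H}$, ${\mathscr E}$ and Theorem~\ref{th4.1} replaced by ${\mathscr H}_{\ast}$, ${\mathscr E}_{\ast}$ and Theorem~\ref{th4.20}. The identity has already been checked on elementary functions: for $f_\triangle\in{\mathscr E}_{\ast}$ of the form~\eqref{sec6-eq6.4-0} one has $\int_0^t\int_{\mathbb R}f_\triangle(x,s){\mathscr L}^{H}(dx,ds)=-[f_\triangle(B^H,\cdot),B^H]^{(W)}_t$. Hence everything reduces to justifying the passage to the limit along an approximating sequence, and the $L^2$-estimate of Theorem~\ref{th4.20} is exactly the right tool for this.

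First I would fix $f\in{\mathscr H}_{\ast}$ and, using the density of ${\mathscr E}_{\ast}$ in ${\mathscr H}_{\ast}$, choose $\{f_{\triangle,n}\}\subset{\mathscr E}_{\ast}$ with $\|f_{\triangle,n}-f\|_{{\mathscr H}_{\ast}}\to 0$. By linearity of the generalized quadratic covariation (and of the elementary-function integral) together with the elementary-function identity, for all $m,n$ one has
\begin{align*}
E&\left|\int_0^t\int_{\mathbb R}\big(f_{\triangle,n}-f_{\triangle,m}\big)(x,s){\mathscr L}^{H}(dx,ds)\right|^2\\
&\qquad=E\left|\left[(f_{\triangle,n}-f_{\triangle,m})(B^H,\cdot),B^H\right]^{(W)}_t\right|^2
\le C_H\|f_{\triangle,n}-f_{\triangle,m}\|_{{\mathscr H}_{\ast}}^2,
\end{align*}
the inequality being Theorem~\ref{th4.20}. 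Since $\{f_{\triangle,n}\}$ is Cauchy in ${\mathscr H}_{\ast}$, the right-hand side tends to $0$, so $\big\{\int_0^t\int_{\mathbb R}f_{\triangle,n}(x,s){\mathscr L}^{H}(dx,ds)\big\}$ is Cauchy in $L^2(\Omega)$ and therefore converges; applying the same estimate to the difference of two such approximating sequences shows the limit is independent of the choice of $\{f_{\triangle,n}\}$, so the definition of $\int_0^t\int_{\mathbb R}f(x,s){\mathscr L}^{H}(dx,ds)$ recalled before the statement is legitimate.

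It then remains to identify the limit. Applying the $L^2$-bound of Theorem~\ref{th4.20} to $f_{\triangle,n}-f$ gives the time-dependent analogue of Corollary~\ref{lem5.2}, namely $[f_{\triangle,n}(B^H,\cdot),B^H]^{(W)}_t\to[f(B^H,\cdot),B^H]^{(W)}_t$ in $L^2$ as $n\to\infty$. Letting $n\to\infty$ in the elementary-function identity $\int_0^t\int_{\mathbb R}f_{\triangle,n}(x,s){\mathscr L}^{H}(dx,ds)=-[f_{\triangle,n}(B^H,\cdot),B^H]^{(W)}_t$ and using the two $L^2$-convergences just obtained yields~\eqref{sec4-eq4.8}.

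The argument is essentially formal once Theorem~\ref{th4.20} is in hand; the genuine content lies there (existence of $[f(B^H,\cdot),B^H]^{(W)}$ and the $L^2$-estimate) and in the density of ${\mathscr E}_{\ast}$ in ${\mathscr H}_{\ast}$, both already established. The only point requiring a little care is the independence of the limit from the approximating sequence, which, as noted, follows from the same $L^2$-estimate applied to an interleaved sequence; so I do not expect any serious obstacle within this theorem itself.
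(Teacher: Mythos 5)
Your proposal is correct and is essentially the paper's own argument: the paper establishes the identity on ${\mathscr E}_{\ast}$, defines the integral by $L^2$-limits along approximating sequences, and lets Theorem~\ref{th4.20} (via the same Cauchy/well-definedness reasoning you spell out, mirroring Corollary~\ref{lem5.2} and the derivation of~\eqref{sec6-eq6.4} in Section~\ref{sec5}) carry the passage to the limit. No substantive difference.
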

\begin{corollary}\label{cor6.2}
Let $0<H<\frac12$, $F\in C^{1,1}({\mathbb R}\times {\mathbb R}_{+})$
and $\frac{\partial}{\partial x}F=f\in {\mathscr H}_{\ast}$. Then
the It\^o type formula
\begin{align*}
F(B_t^{H},t)=F(0,0)+&\int_0^tf(B_s^{H},s)dB_s^{H}\\
&\qquad+\int_0^t\frac{\partial }{\partial t}F(B_s^{H},s)ds
-\frac12\int_0^t\int_{\mathbb R}f(x,s){\mathscr L}^{H}(dx,ds)
\end{align*}
holds.
\end{corollary}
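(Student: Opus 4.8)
The plan is to transcribe the density-and-isometry argument of Section~\ref{sec5} (Lemma~\ref{lem6.1} and~\eqref{sec6-eq6.4}) to the two-parameter weighted local time ${\mathscr L}^H(x,s)$. The identity~\eqref{sec4-eq4.8} has already been verified for elementary $f_\triangle\in{\mathscr E}_\ast$ of the form~\eqref{sec6-eq6.4-0}, namely
\[
\int_0^t\int_{\mathbb R}f_\triangle(x,s){\mathscr L}^{H}(dx,ds)=-\left[f_\triangle(B^H,\cdot),B^H\right]^{(W)}_t,
\]
so only the extension to $f\in{\mathscr H}_\ast$ remains. First I would record the time-dependent analogue of Corollary~\ref{lem5.2}: if $g_n\to g$ in ${\mathscr H}_\ast$ then $[g_n(B^H,\cdot),B^H]^{(W)}_t\to[g(B^H,\cdot),B^H]^{(W)}_t$ in $L^2$. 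This is immediate from the linearity of the generalized quadratic covariation together with the bound of Theorem~\ref{th4.20} applied to $g_n-g$, which gives $E|[g_n(B^H,\cdot),B^H]^{(W)}_t-[g(B^H,\cdot),B^H]^{(W)}_t|^2\le C_H\|g_n-g\|_{{\mathscr H}_\ast}^2$.

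Next, given $f\in{\mathscr H}_\ast$, I would pick $\{f_{\triangle,n}\}\subset{\mathscr E}_\ast$ with $f_{\triangle,n}\to f$ in ${\mathscr H}_\ast$, which is possible since ${\mathscr E}_\ast$ is dense. Linearity and Theorem~\ref{th4.20} yield $E|[f_{\triangle,n}(B^H,\cdot),B^H]^{(W)}_t-[f_{\triangle,m}(B^H,\cdot),B^H]^{(W)}_t|^2\le C_H\|f_{\triangle,n}-f_{\triangle,m}\|_{{\mathscr H}_\ast}^2$, so the sequence $\{[f_{\triangle,n}(B^H,\cdot),B^H]^{(W)}_t\}_n$ is Cauchy in $L^2$; by the previous step its limit is $[f(B^H,\cdot),B^H]^{(W)}_t$. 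Using the elementary-function identity above, $\{\int_0^t\int_{\mathbb R}f_{\triangle,n}(x,s){\mathscr L}^{H}(dx,ds)\}_n$ then also converges in $L^2$, and interleaving two approximating sequences shows the limit does not depend on the choice of $\{f_{\triangle,n}\}$. Hence $\int_0^t\int_{\mathbb R}f(x,s){\mathscr L}^{H}(dx,ds):=\lim_n\int_0^t\int_{\mathbb R}f_{\triangle,n}(x,s){\mathscr L}^{H}(dx,ds)$ is well defined in $L^2$, and passing to the limit in the elementary-function identity gives~\eqref{sec4-eq4.8} for all $t\in[0,T]$.

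The only step that is not purely formal is that all of the above rests on the $L^2$-estimate $E|[g(B^H,\cdot),B^H]^{(W)}_t|^2\le C_H\|g\|_{{\mathscr H}_\ast}^2$, i.e.\ Theorem~\ref{th4.20}; granting that theorem, no new analytic input is needed beyond the fBm estimates of Section~\ref{sec3}, and the argument is a routine two-parameter version of Lemma~\ref{lem6.1} and~\eqref{sec6-eq6.4}. Finally, Corollary~\ref{cor6.2} follows immediately by substituting~\eqref{sec4-eq4.8} into the It\^{o} formula of Theorem~\ref{th5.4}.
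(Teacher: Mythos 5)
Your proposal is correct and follows essentially the same route as the paper: the paper likewise defines $\int_0^t\int_{\mathbb R}f(x,s){\mathscr L}^{H}(dx,ds)$ as the $L^2$-limit over approximating elementary functions, using the bound of Theorem~\ref{th4.20} to pass the identity $\int_0^t\int_{\mathbb R}f_{\triangle}(x,s){\mathscr L}^{H}(dx,ds)=-[f_\triangle(B^H,\cdot),B^H]^{(W)}_t$ to all of ${\mathscr H}_\ast$ (Theorem~\ref{prop6.1}), and then obtains Corollary~\ref{cor6.2} by substituting this Bouleau-Yor identity into the It\^o formula of Theorem~\ref{th5.4}. No gaps beyond the ones you already flag as granted inputs.
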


Finally, let us consider the weighted local time of fBm $B^H$ with
$0<H<\frac12$ on a continuous curve. Let $a(t)$ denote a continuous
function on $[0,T]$. Then the function
$$
f_a(x,s)=1_{(-\infty,a(s))}(x)
$$
belongs to ${{\mathscr H}_{\ast}}$, and the integral
$$
\int_0^t\int_{\mathbb R}f_a(x,s){\mathscr L}^{H}(dx,ds)
$$
and the generalized quadratic covariation
$\left[f_a(B^{H},\cdot),B^{H}\right]^{(W)}$ exist in $L^2$. By the
idea due to Eisenbaum~\cite{Eisen1} and F\"ollmer {\it et
al}~\cite{Follmer}, as an example, we can show that the process
$$
\int_0^t\int_{\mathbb R}f_a(x,s){\mathscr L}^H(dx,ds),\quad t\geq 0
$$
is increasing and continuous. Thus, we can define the weighted local
time of $B^H$ with $0<H<\frac12$ at a continuous curve $t\mapsto
a(t)$ by setting
$$
{\mathscr L}^H(a(\cdot),t))=\int_0^t\int_{\mathbb
R}f_a(x,s){\mathscr L}^{H}(dx,ds).
$$



\end{document}